\PassOptionsToPackage{linktocpage}{hyperref}
\documentclass[hidelinks,onefignum,onetabnum]{siamart251216}
\usepackage{amsfonts,amssymb}
\usepackage{mathrsfs,mathtools}
\usepackage{graphicx,subfigure,epstopdf}
\graphicspath{{./figures_cv/}}
\usepackage{latexsym,float,tikz}

\usepackage{algorithm}
\usepackage{algpseudocode}
\usepackage{float}

\newcommand{\KwData}[1]{\State \textbf{Input:} #1}
\newcommand{\KwResult}[1]{\State \textbf{Output:} #1}

\usepackage{adjustbox}
\usepackage{esint}
\usepackage{multirow}
\usepackage{ctable}

\usepackage{verbatim}
\usepackage[margin=2.5cm]{geometry}
\usepackage{enumerate}
\usepackage{pdflscape}
\usepackage[compact]{titlesec}
\usepackage{enumitem}
\usepackage{url}
\usepackage{bbm}

\numberwithin{equation}{section}
\newtheorem{remark}[theorem]{Remark}
\newtheorem{assumption}{Assumption}[section]

\newcommand{\Cov}{C_{\mathrm{ov}}}

\newcommand{\uu}{\boldsymbol{u}}
\newcommand{\vv}{\boldsymbol{v}}
\newcommand{\ww}{\boldsymbol{w}}
\newcommand{\xx}{\boldsymbol{x}}

\newcommand{\nn}{\boldsymbol{n}}
\newcommand{\ttt}{\boldsymbol{t}}
\newcommand{\zzz}{\boldsymbol{0}}
\newcommand{\zz}{\boldsymbol{z}}
\newcommand{\ff}{\boldsymbol{f}}

\newcommand{\Cpoin}[1]{{\rm C}_{\mathrm{M}}(#1)}

\newcommand{\dx}{\,\mathrm{d}x}
\newcommand{\Ce}{{\rm {C}}_{\mathrm{est}}}

\newcommand{\Const}[1]{{\rm C}_{\mathrm{#1}}}
\newcommand{\op}{\operatorname}
\newcommand{\Yueqi}[1]{{\color{black}#1}}

\begin{document}
\title{Numerical homogenization for indefinite time-harmonic Maxwell equations}
\author{Yueqi Wang\thanks{Department of Mathematics, Purdue University, 610 Purdue Mall, West Lafayette, 47907, IN, USA ({\tt{wang7406@purdue.edu}}). Part of YW's work was supported by Hong Kong RGC via the Hong Kong PhD Fellowship Scheme.} \and Wing Tat Leung \thanks{Department of Mathematics, City University of Hong Kong, 83 Tat Chee Ave, Kowloon Tong, Hong Kong SAR, P.R. China. ({\tt{wtleung27@cityu.edu.hk}}).} \and Guanglian Li\thanks{Corresponding author. Department of Mathematics, The University of Hong Kong, Pok Fu Lam Road, Hong Kong SAR, P.R. China. ({\tt{lotusli@maths.hku.hk}}) GL acknowledges the support from Hong Kong RGC General Research Fund (Project number: 17308924).}}
\maketitle
\begin{abstract}
We propose a novel numerical homogenization method based on the edge multiscale approach for solving indefinite time-harmonic Maxwell equations in heterogeneous media with large wavenumber. Numerical methods for these equations in homogeneous media with high wavenumber are particularly challenging due to the so-called pollution effect: the mesh size must be significantly smaller than the reciprocal of the wavenumber to achieve a desired accuracy. This challenge is amplified in heterogeneous media, which frequently occur in practical applications such as metamaterial simulations, since resolving the heterogeneity is necessary for obtaining reliable solutions. Our approach overcomes this difficulty by avoiding explicit resolution of the heterogeneity, while employing a mesh size that depends almost linearly on the reciprocal of the wavenumber. The approximation properties and stability of the method rely critically on the development and rigorous analysis of a novel, nonstandard variational formulation, which constitutes the main innovation of this work. Extensive numerical experiments are provided to validate our theoretical findings.
\end{abstract}
\begin{keywords} time-harmonic Maxwell equation, heterogeneous media, large wavenumber, numerical homogenization, multiscale methods
\end{keywords}
\begin{MSCcodes}
 35Q61, 65N12, 65N15, 65N30, 78M10
\end{MSCcodes}
\section{Introduction}
Let $D\subset\mathbb{R}^3$ be  open, bounded, contractible domain with polyhedral Lipschitz boundary. We consider Maxwell equations in the frequency domain with impedance boundary condition, and seek $\uu:\;D\to\mathbb{C}^3$ in a proper functional space, s.t., 
\begin{equation}\label{eq:model}
	\left\{
	\begin{aligned}
		\op{curl}\left(\epsilon^{-1}\op{curl}\uu\right)
		-k^2\uu&=\zzz&&\text{ in }D,\\
		\epsilon^{-1}\op{curl}\uu\times \nn-ik\uu_T&=\boldsymbol{g}&&\text{ on }\partial D.
	\end{aligned}
	\right.
\end{equation}
Here, $k$ is the wavenumber with $k\geq k_0>0$ with $k_0$ the lowest frequency considered in the analysis, $\nn$ is the unit outward normal vector on $\partial D$, $\uu_T\coloneqq(\nn\times\uu)\times \nn$ denotes tangential components trace and $\boldsymbol{g}$ the boundary data. We assume $\epsilon\in W^{1,\infty}(D)$ is uniformly positive with $1\leq\epsilon(\mathbf{x})
\leq\epsilon_{\max}$ for some upper bound $\epsilon_{\max}\in [1,\infty)$. Problems of this type arise from modeling the propagation of electromagnetic waves in heterogeneous media, and achieving efficient and reliable numerical simulations is of fundamental importance. Our focus is on cases where $\epsilon^{-1}$ varies rapidly and the wavenumber $k$ is large. It is important to note that the presence of multiple scales in the coefficient $\epsilon$ makes directly solving Problem \eqref{eq:model} particularly challenging, as resolving the problem at the finest scale would entail significant computational costs. Moreover, the high computational complexity involved in discretizing \eqref{eq:model}, especially for heterogeneous materials and large wavenumbers, is almost unavoidable. Therefore, multiscale model order reduction techniques are essential to mitigate the substantial computational and storage demands.

To address these challenges, many promising numerical methods have been developed and analyzed in the past few years. These include the edge finite element methods \cite{Cia-2016},  heterogeneous multiscale methods (HMM) with (locally) periodic structure \cite{ciarlet2017approximation,henning2016new,verfurth2019heterogeneous}, Localized Orthogonal Decomposition methods (LOD) \cite{gallistl2018numerical,doi:10.1137/19M1293818}, Constraint energy minimizaing Generalized Multiscale Finite Element Method (CEM-GMsFEM) \cite{chung2025multiscale}, GMsFEM \cite{MR3851624}, multiscale spectral generalized finite element method (MS-GFEM) \cite{ma2025curl} and immersed finite element method \cite{MR4678368}. Among these well-established approaches, most of them focus on $\mathbf{H}(\operatorname{curl})$-elliptic problems rather than the indefinite problems considered here. The indefinite case presents additional theoretical and numerical challenges, primarily due to difficulties in establishing the stability of the numerical schemes and the inherently indefinite nature of the discretized problem. Despite ongoing developments, progress in this area remains in its early stages, and much work is needed to effectively address practical, real-world applications. To the best of our knowledge, efficient multiscale methods that achieve sufficient accuracy for indefinite time-harmonic Maxwell equations with heterogeneous media are not yet available. This gap constitutes the main motivation and goal of this paper.

In this paper, we develop a novel edge multiscale method for \eqref{eq:model} such that the computational complexity grows almost linearly with respect to the wavenumber, and in the mean while, independent of the heterogeneity. 
The main idea of this method is to first represent the global solution as a superposition of local solutions supported on a family of overlapping subdomains associated with a coarse mesh that does not resolve the fine-scale heterogeneity. Each local solution is determined by its trace on the boundary of its subdomain. Then we approximate these traces using suitable basis functions defined on the coarse skeleton, which induces the multiscale ansatz space and establishes its approximation properties. This method was first proposed in \cite{MR3939320} and implemented for the elliptic problems with heterogeneous coefficients \cite{MR3980476}. The main contributions of this paper are as following. 1). We develop a novel edge multiscale method for time-harmonic Maxwell equation based upon local-global splitting of the solution (Algorithm \ref{algorithm:wavelet}). 2). The approximation properties of the associated multiscale ansatz space are rigorously established. This is achieved by first proposing a nonstandard variational formulation (Lemma \ref{lem:nonstandard-var}) and then analyzing the regularity of its solution in a proper functional space (Lemma \ref{lem:well-posedness-nonstandard} and Theorem \ref{lem:very-weak}). 3) The well-posedness of Algorithm \ref{algorithm:wavelet} is rigorously proven (Theorem \ref{wellpose}). 4) Extensive numerical experiments are conducted to validate our theoretical results.

The remainder of this paper is structured as follows. We first introduce in Section \ref{sec:model} the standard functional spaces for \eqref{eq:model}, and show its well-posedness and finite element approximation. Then we present our multiscale method in Section \ref{sec:main-method} based upon the local-global splitting of the solution.  The convergence of this multiscale method is presented in Section \ref{sec:error}. This is followed by extensive numerical test with homogeneous and heterogeneous coefficients in Section \ref{sec:num}. Finally, we summarize our main results and propose several future work in Section \ref{sec:conclusion}. 

Throughout the paper we adopt the standard notation for the complex-valued
Lebesgue space $L^{\infty}(G;\mathbb{C})$ and Sobolev spaces $H^{s}(G;\mathbb{C})$ for $s\in\mathbb{R}$ on an open bounded domain
$G\subset\mathbb{R}^{3}$. We suppress $\mathbb{C}$ for simplicity. Related norms and semi-norms of
$H^{s}(G)$ and the norm of $L^{\infty}(G)$ are denoted by $\|\cdot\|_{s,G}$, $|\cdot|_{s,G}$ and
$\|\cdot\|_{L^\infty(G)}$, respectively.
We define $(w,v)_{G}\coloneqq\int_Gw\overline{v}{\rm d}x$ the $L^2(G)$ inner product and denote $\|\cdot\|_{L^2(G)}$ its induced norm, $\langle w,v\rangle_{\gamma}\coloneqq\int_{\gamma}w\overline{v}{\rm d}s$ the $L^2(\gamma)$ inner product and denote $\|\cdot\|_{L^2(\gamma)}$ its induced norm
for any $G\subset D$, and $\gamma\subset\partial G$. We use $\Re$, $\Im$ and $\overline{\cdot}$ to denote the real part, imaginary part and conjugate of a complex number. The notation $a\lesssim b$ means that there exists constant $C>0$, independent of all parameters of interest ($k,H,\epsilon$), such that $a\leq Cb$. We also write $a\simeq b$ if $a \lesssim b$ and $b \lesssim a$.

\section{Problem setting}\label{sec:model}
We first introduce several standard functional spaces for \eqref{eq:model}, and then summarize key analytical properties, discuss the associated numerical challenges, and present its finite element approximation.

We start with several Hilbert spaces that commonly used in the Maxwell equations. 
$\mathbf{H}(\operatorname{curl};D)\coloneqq\{\ww\in (L^2(D))^3|\op{curl}\ww\in (L^2(D))^3\}$, $\mathbf{H}_0(\operatorname{curl};D)\coloneqq\mathrm{closure\;of}\;
(\mathcal{C}_0^{\infty}(D))^3\;\mathrm{in}\;\mathbf{H}(\operatorname{curl};D)$, 
$\mathbf{H}_{\operatorname{imp}}(\operatorname{curl};D)\coloneqq\{\ww\in \mathbf{H}(\operatorname{curl};D)|\ww_T\in (L^2(\partial D))^3\}$, 
$L^2_{\ttt}(\partial D)\coloneqq\{\boldsymbol{v}\in (L^2(\partial D))^3|\boldsymbol{v}\cdot\nn=0\; \mathrm{ on }\;\partial D\}$, 
$\mathbf{H}_{\ttt}^{s}(\partial D)\coloneqq({H}^{s}(\partial D))^3\cap L^2_{\ttt}(\partial D)$ for $s\in \left[0,\frac{1}{2}\right)$, 
$\mathbf{H}(\op{div }0;D)\coloneqq\{\vv\in (L^2(D))^3|\nabla\cdot\vv=0\text{ in }D\}$, 
$\mathbf{H}_0(\op{div };D)\coloneqq\{\vv\in \mathbf{H}(\op{div };D)|\vv\cdot\nn=0\text{ on }\partial D\}$, 
$\mathbf{H}_0(\op{div }0;D)\coloneqq\{\vv\in \mathbf{H}(\op{div }0;D)|\vv\cdot\nn=0\text{ on }\partial D\}$, 
$\mathbf{H}^s(\op{curl};D)\coloneqq\{\boldsymbol{v}\in ({H}^s(D))^3|\op{curl}\boldsymbol{v}\in ({H}^s(D))^3\}$ for $s\geq 0$. $\ww_T\coloneqq(\nn\times\ww)\times \nn$ denotes tangential components trace of $\ww\in \mathbf{H}(\operatorname{curl};D)$. We endow $\mathbf{H}_{\op{imp}}(\mathrm{curl};D)$ with the $k$-weighted graph norm $\Vert\cdot\Vert_{\mathbf{H}_{\op{imp}}(\mathrm{curl};D)}$ given by
\begin{equation*}
\Vert\boldsymbol{u}\Vert_{\mathbf{H}_{\op{imp}}(\mathrm{curl};D)}^2\coloneqq\Vert\op{curl}\boldsymbol{u}\Vert^2_{L^2(D;\epsilon^{-1/2})}+k^2\Vert\boldsymbol{u}\Vert^2_{L^2(D)}+k\Vert\boldsymbol{u}_T\Vert^2_{L^2(\partial D)},
\end{equation*} 
where $\|\ff\|_{L^2(\cdot;\epsilon^{-1/2})}:=\|\epsilon^{-1/2}\ff\|_{L^2(\cdot)}$ denotes the weighted $L^2$ norm.


Then we are ready to provide a precise formulation for \eqref{eq:model}. Let $\boldsymbol{g}\in L^2_{\ttt}(\partial D)$, we aim to 
find $\uu\in \mathbf{H}_{\op{imp}}(\operatorname{curl};D)$ such that
\begin{equation*}
	\left\{
	\begin{aligned}
		\op{curl}\left(\epsilon^{-1}\op{curl}\uu\right)
		-k^2\uu&=\zzz&&\text{ in }D,\\
		\mathcal{B}\uu\coloneqq\epsilon^{-1}\op{curl}\uu\times \nn-ik\uu_T&=\boldsymbol{g}&&\text{ on }\partial D.
	\end{aligned}
	\right.
\end{equation*}

\subsection{Weak formulation and {\it a priori } estimate}\label{priori estimate}
We define the sesquilinear form $a(\cdot,\cdot)$ on $\mathbf{H}_{\op{imp}}(\op{curl};D)\times \mathbf{H}_{\op{imp}}(\op{curl};D)$ associated to~\eqref{eq:model} by
\begin{align}\label{eq:sesqu}
	a(\boldsymbol{u},\boldsymbol{v}):=(\epsilon^{-1}\op{curl} \boldsymbol{u}, \op{curl}\boldsymbol{v})_{D}-k^2( \boldsymbol{u},\boldsymbol{v})_{D}
	-ik\langle\uu_T,\vv_T\rangle_{\partial D}.
\end{align}
The sesquilinear form $a(\cdot,\cdot):\mathbf{H}_{\op{imp}}(\op{curl};D)\times\mathbf{H}_{\op{imp}}(\op{curl};D)\to\mathbb{C}$ is bounded, 
\begin{align}\label{a property 1}
|a(\ww,\vv)|\leq\Vert\ww\Vert_{\mathbf{H}_{\op{imp}}(\op{curl};D)}\Vert\vv\Vert_{\mathbf{H}_{\op{imp}}(\op{curl};D)}\;\forall
\ww,\vv\in \mathbf{H}_{\op{imp}}(\op{curl};D).		
\end{align}
Moreover, there holds the G$\mathring{a}$rding's inequality, 
\begin{align}\label{a property 2}
|a(\ww,\ww)|+2k^2(\ww,\ww)_{L^2(D)}\geq\Vert\ww\Vert_{\mathbf{H}_{\op{imp}}(\op{curl};D)}^2 \;\forall\ww\in \mathbf{H}_{\op{imp}}(\op{curl};D).
\end{align}
Then the weak formulation to \eqref{eq:model} reads as seeking $\boldsymbol{u}\in \mathbf{H}_{\op{imp}}(\mathrm{curl};D)$ s.t.,
\begin{equation}\label{eq:bilinear-form}
a(\boldsymbol{u},\boldsymbol{v})=\langle\boldsymbol{g},\vv_T\rangle_{\partial D}
\;\forall\boldsymbol{v}\in \mathbf{H}_{\op{imp}}(\mathrm{curl};D).
\end{equation}
The existence and uniqueness of solution to \eqref{eq:bilinear-form} is proved with Fredholm theory; see \cite[Theorem 13]{haddar2015maxwell} and \cite[Theorem 4.17]{Monk2003}. Its stability with respect to the wavenumber relies on the geometry of the domain. 
For $C^2$-domains with smooth boundary data $\boldsymbol{g}\in \mathbf{H}_{\ttt}^{1/2}(\partial D)$ and Lipschitz-continuous coefficient, it is well-known that $\uu\in \mathbf{H}^1(\op{curl};D)$ \cite[Section 4.5.d]{costabel2010corner}. For the general case, i.e., $\boldsymbol{g}\in L^2_{\ttt}(\partial D)$, the regularity of the solution $\boldsymbol{u}$ is reduced. We introduce one critical assumption on the regularity of the solution $\uu$, which is valid for some specific data $\boldsymbol{g}$, see, for example, \cite[Theorems 4.1 and 4.7]{chicaud2023analysis}.
\begin{assumption}\label{ass:regularity}
	Let $\uu\in \mathbf{H}_{\op{imp}}(\mathrm{curl};D)$ be the solution to Problem \eqref{eq:bilinear-form}, we assume $\uu\in \mathbf{H}^{s}(\op{curl};D)$ for some $s\in(\frac{1}{2},1]$. 
\end{assumption}

Next, we introduce a dual problem to \eqref{eq:bilinear-form}. For any ${\vv}\in (L^2(D))^3$, let $\zz\in \mathbf{H}_{\op{imp}}(\op{curl};D)$ satisfy
\begin{align}\label{dual}
	a(\boldsymbol{w},\zz)=(\ww,\vv)_{D}\;\forall\boldsymbol{w}\in\mathbf{H}_{\op{imp}}(\mathrm{curl};D). 
\end{align}
Moreover, we assume for some $s\in (\frac{1}{2},1]$ and $\theta\geq 0$, there holds
\begin{align}\label{ass:dual}
	\Vert\zz\Vert_{\mathbf{H}^{s}(\op{curl};D)}\leq \Const{stab}k^{\theta}\Vert\vv\Vert_{L^2(D)}.
\end{align}
Here, $\|\vv\|_{\mathbf{H}^{s}(\op{curl};D)}\coloneqq \|\op{curl}\vv\|_{H^{s}(D)}+k\|\vv\|_{H^{s}(D)}$ for all $\vv\in \mathbf{H}^{s}(\op{curl};D)$ and $s\geq 0$.
\subsection{Finite element discretization}\label{subsec:fem}
To discretize problem \eqref{eq:bilinear-form},
let $\mathcal{T}_H$ be a shape-regular, conformal hexahedral mesh of the physical domain $D$ with a mesh size $H$. We assume that $\epsilon$ is rapidly varying in $D$, and that the coarse mesh $\mathcal{T}_H$ fails to resolve this phenomena. Furthermore, we assume that all elements are parallelepipeds with edges parallel to the coordinate axes. This implies that each element $K\in\mathcal{T}_H$ can be obtained from the reference element $\hat{K}=(0,1)^3$ via a diagonal affine map $F_K(\mathbf{x})=\mathbf{B}_K\mathbf{x}+\mathbf{b}$ with $\mathbf{B}_K$ an invertible diagonal matrix. 
The vertices of $\mathcal{T}_H$
are denoted by $\{O_i\}_{i=1}^{N}$, with $N$ being the total number of coarse nodes.
The coarse neighborhood associated with the node $O_i$ is denoted by 
$\omega_i:=\bigcup\overline{\{ K_j\in\mathcal{T}_H: ~~~ O_i\in \overline{K}_j\}}$.
The overlapping constant $\Cov$ is defined by
\begin{align}\label{eq:overlap}
	\Cov:=\max\limits_{K\in \mathcal{T}_{H}}\#\{O_i: K\subset\omega_i \text{ for } i=1,2,\cdots,N\}.
\end{align}
Moreover, we denote $\delta_i\coloneqq\max\{\|\nabla\epsilon^{-1}\|_{L^{\infty}(\omega_i)},1\}$, $\bar{\epsilon}_{i}:=
\|\epsilon\|_{L^{\infty}(\omega_i)}$ and $\underline{\epsilon}_{i}:=\|\epsilon^{-1}\|_{L^{\infty}(\omega_i)}^{-1}$. Additionally, we introduce the local contrast $\Lambda_i:=\bar{\epsilon}_{i}\underline{\epsilon}_{i}^{-1}$.

Over the coarse mesh $\mathcal{T}_H$, let $\mathbf{V}_H$ be the $\mathbf{H}(\mathrm{curl})$-conforming finite element space,
\[
\mathbf{V}_H:=\{\mathbf{v}\in \mathbf{H}(\mathrm{curl};D): \vv|_{K}\in  Q_{p-1,p,p}\times Q_{p,p-1,p}\times Q_{p,p,p-1} \text{ for all } K\in \mathcal{T}_H\},
\]
for given integer $p\geq 1$. Here, ${Q}_{j_1,j_2,j_3}$ denotes the space of polynomial functions with polynomial of degree at most $j_k$ with respect to $x_k$ for $k=1,2,3$. 
Let $\mathbf{I}_H: H(\op{curl};D)\to V_H$ be the interpolant defined in \cite[Section 6]{Monk2003} or \cite[Section 2]{bulovyatov2010parallel}. The interpolation errors are well-known \cite[Theorem 6.6]{Monk2003}.
Let $\vv\in \mathbf{H}^s(\op{curl};D)$ for $s\in (\frac{1}{2},1]$, then there holds	
\begin{align*}
		\Vert \boldsymbol{v}-\mathbf{I}_H\boldsymbol{v}\Vert_{L^2(D)}+\Vert \op{curl}(\boldsymbol{v}-\mathbf{I}_H\boldsymbol{v})\Vert_{L^2(D)}\lesssim H^s\left(\Vert \boldsymbol{v}\Vert_{\mathbf{H}^s(D)}+\Vert \op{curl}\boldsymbol{v}\Vert_{\mathbf{H}^s(D)}\right).
	\end{align*}
Then the standard $\mathbf{H}(\op{curl})$-conforming Galerkin FEM approximation of Problem \eqref{eq:bilinear-form} is to find $\boldsymbol{u}_H\in \mathbf{V}_H$, s.t., 
\begin{align}\label{eqn:weakform_h}
	a(\boldsymbol{u}_H,\boldsymbol{v}_H)=\langle\boldsymbol{g},\boldsymbol{v}_H\rangle_{\partial D} \quad \forall \boldsymbol{v}_H\in \mathbf{V}_H.
\end{align}
The existence of the finite element solution of \eqref{eqn:weakform_h} has been proved provided that the mesh size $H$ is sufficiently small \cite[Lemma 9.6]{ melenk2023wavenumber}. 
Let $D$ be a bounded Lipschitz domain with simply connected and sufficiently smooth boundary, there exists a constant $C_1, C_2$ independent of $k$ and $H$, such that if the mesh satisfy
\begin{equation}\label{inf sup1}
\frac{Hk}{p}\leq C_1,\text{ and }p\geq\max\{1,C_2\ln k\},
\end{equation}
then the discrete problem \eqref{eqn:weakform_h} has a unique solution, which satisfies the quasi-optimal error estimate
\begin{equation*}
\Vert\boldsymbol{u}-\boldsymbol{u}_{H}\Vert_{\mathbf{H}_{\op{imp}}(\mathrm{curl};D)}
	\lesssim\inf\limits_{\boldsymbol{v}\in  \mathbf{V}_{H}}\Vert \boldsymbol{v}-\boldsymbol{u}\Vert_{\mathbf{H}_{\op{imp}}(\mathrm{curl};D)}.	
\end{equation*}
\section{The algorithm}\label{sec:main-method}
We introduce in this section the methodology for the construction of multiscale ansatz space $\mathbf{V}_{\text{ms},\ell}$.
\subsection*{Local-global splitting}
To start, we define the local sesquilinear form $a_i(\cdot,\cdot)$ on $\mathbf{H}_{\op{imp}}(\op{curl};\omega_i)\times \mathbf{H}_{\op{imp}}(\op{curl};\omega_i)$ by
\begin{align*}
	a_i(\boldsymbol{u},\boldsymbol{v})\coloneqq (\epsilon^{-1}\op{curl} \boldsymbol{u}, \op{curl}\boldsymbol{v})_{\omega_i}-k^2( \boldsymbol{u},\boldsymbol{v})_{\omega_i}
	-ik\langle\uu_T,\vv_T\rangle_{\partial \omega_i}.
\end{align*}
Note that the restriction $\uu^{i}\coloneqq\uu|_{\omega_i}\in \mathbf{H}_{\op{imp}}(\op{curl};\omega_i)$ satisfies
\begin{equation}\label{eq:loc-splitting}
a_i(\uu^{i},\vv)=\langle(\mathcal{B}\uu)|_{\gamma_i},\vv_T\rangle_{\gamma_i}+\langle\boldsymbol{g},\vv_T\rangle_{\Gamma_i} \qquad \forall \vv\in H_{\op{imp}}(\op{curl};\omega_i).
\end{equation}
Here, $\gamma_i\coloneqq \partial\omega_i\backslash\partial D$ and $\Gamma_i\coloneqq \partial\omega_i\cap \partial D$ denote the internal and outer boundaries for each subdomain $\omega_i$. 

The linearity of \eqref{eq:loc-splitting} implies the decomposition $\uu^{i}\coloneqq \uu^i_B+\uu^i_I$, with $\uu^i_B\in \mathbf{H}_{\op{imp}}(\op{curl};\omega_i)$ as the local boundary particular solution determined by the global boundary condition $\boldsymbol{g}$,
\begin{equation}\label{eq:loc-splitting-b}
a_i(\uu^{i}_B,\vv)=\langle\boldsymbol{g},\vv_T\rangle_{\Gamma_i} \qquad \forall \vv\in \mathbf{H}_{\op{imp}}(\op{curl};\omega_i),
\end{equation}
and local multiscale basis function $\uu^i_I\in \mathbf{H}_{\op{imp}}(\op{curl};\omega_i)$ that is independent of the global boundary condition $\boldsymbol{g}$, 
\begin{equation}\label{eq:loc-splitting_I}
a_i(\uu^{i}_I,\vv)=\langle(\mathcal{B}\uu)|_{\gamma_i},\vv_T\rangle_{\gamma_i} \qquad \forall \vv\in \mathbf{H}_{\op{imp}}(\op{curl};\omega_i).
\end{equation}

Next, we introduce a partition of unity $\{\chi_i\}_{i=1}^N$ subordinate to the cover $\{\omega_i\}_{i=1}^N$ to induce a global decomposition, which is assumed to satisfy 
\begin{equation}\label{eq:pou}
\begin{aligned}
{\text{supp}(\chi_i)}\subset\overline{\omega}_i,\;
\sum_{i=1}^{N}\chi_i=1 \text{ in } D,\;
 \| \chi_i\|_{C(\overline{\omega}_i)}\leq 1,\;
\|\nabla \chi_i\|_{C(\overline{\omega}_i)}\leq C_{\text{G}}H^{-1}
\end{aligned}
\end{equation}
for some positive constant $ C_{\text{G}}$. Moreover, we assume this partition of unity $\{\chi_i\}_{i=1}^N$ satisfies the vanishing gradient property, 
\begin{align}\label{prop:vanish-grad}
\nabla\chi_i=\zzz \text{ on }\partial \omega_i.
\end{align}
This property is crucial for the definition of the boundary multiscale basis functions, which can be guaranteed by known result, e.g., the flat-top partition of unity functions defined by the Shepard's functions \cite{Schweitzer:1997}. 

Consequently, we can represent $\uu$ as a summation of local parts,
\begin{equation}\label{eq:harmonic-glo}
\begin{aligned}
\uu&=\left(\sum_{i=1}^{N}\chi_i\right) \uu=\sum_{i=1}^{N}\left(\chi_i \uu^{i}\right)\\
&=\sum_{i=1}^{N}\chi_i (\uu^{i}_B+\uu^i_I)=\sum_{i=1}^{N}\left(\chi_i \uu^{i}_B\right)+
\sum_{i=1}^{N}\left(\chi_i \uu^i_I\right)=:\uu_B+\uu_I.
\end{aligned}
\end{equation}
Here, $\uu_B$ denotes a global particular solution that is determined by the global boundary condition $\boldsymbol{g}$ and thus known, and $\uu_I$ denotes the unknown function we want to approximate. Note that \eqref{eq:loc-splitting-b} and \eqref{prop:vanish-grad} implies $\uu_B$ satisfy
\begin{align*}
\mathcal{B} \uu_B=\boldsymbol{g}\quad \mbox{on }\partial D.
\end{align*}
\subsection*{Haar wavelet}

Let the scaling function $\phi(x)$ and the mother wavelet $\psi(x)$ be given by
\begin{equation*}
	\phi(x)=
	\left\{
	\begin{aligned}
		&1, &&\text{ if } 0\leq x\leq 1,\\
		&0, &&\text{ otherwise,}
	\end{aligned}
	\right.
	\qquad
	\psi(x)=
	\left\{
	\begin{aligned}
		&1, &&\text{ if } 0\leq x\leq 1/2,\\
		&-1, &&\text{ if }1/2< x\leq 1,\\
		&0, &&\text{ otherwise}.
	\end{aligned}
	\right.
\end{equation*}
By means of dilation and translation, the mother wavelet $\psi(x)$ can
result in an orthogonal decomposition of the space $L^2(I)$ with $I:=[0,1]$. To this end, we can define the basis functions on level $\ell\geq 1$ by
\begin{align*}
	\psi_{\ell,j}(x):=2^{\frac{\ell-1}{2}}\psi(2^{\ell-1}x-j) \quad \text{ for all }\quad 0\leq j\leq 2^{\ell-1}-1.
\end{align*}
The subspace $W_{\ell}$ of level $\ell$ is
\begin{equation*}
	W_{\ell}:=
	\left\{
	\begin{aligned}
		&\text{span}\{\phi\} &&\text{ for }\ell =0\\
		&\text{span}\{\psi_{\ell,j}:\quad 0\leq j\leq 2^{\ell-1}-1\}&&\text{ for }\ell\geq 1.
	\end{aligned}
	\right.
\end{equation*}
and we note that subspace $W_{\ell}$ is orthogonal to $W_{\ell'}$ in
$L^2(I)$ for any two different levels $\ell\neq \ell'$. We denote the
subspace in $L^2(I)$, up to level $L$, by $V_{L}$  defined by
\[
V_{L}:=\oplus_{m\leq L}W_{m}.
\]
Note that one can derive the hierarchical decomposition of the space
$L^2(I^{d-1})$ for $d>1$ by means of the tensor product, which is denoted as $V_{\ell}^{\otimes^{d-1}}$. 
Let ${P}_{\ell}: I^2(I^{d-1})\to V_{\ell}^{\otimes^{d-1}}$ be $L^2$-projection for each level $\ell\geq 0$ and let $s\in (0,1]$. Then there holds \cite[Proposition 3.1]{MR3980476}
\begin{align}\label{prop:approx-wavelets}
\|v-P_{\ell}v\|_{L^2(I^{d-1})}&\lesssim 2^{-s\ell}|v|_{H^s(I^{d-1})} &\;\forall v\in H^s(I^{d-1}).
\end{align}

\subsection*{Multiscale ansatz space}

Now we are ready to define the approximative space $\mathbf{V}_{\text{ms},\ell}$. 

To begin, we first define the approximation space over $\partial\omega_i$ for each subdomain $\omega_i$. Let the level parameter $\ell\in \mathbb{N}$ be fixed, and let $\partial\omega_{i}^k$ with $k=1,\cdots,m_i$ be the $m_i$ faces of $\omega_i$ as a partition of $\partial\omega_i$ with no mutual intersection, i.e., $\cup_{k=1}^{m_i}\overline{\partial\omega_{i}^k}=\partial\omega_i$ and $\partial\omega_{i}^k\cap \partial\omega_{i}^{k'}=\emptyset$ if $k\neq k'$. 

Consider the parallelogram-shaped face $\partial\omega_{i}^k$.  Let its two adjacent edges be denoted by the vectors $\mathbf{e}_1^{i,k}$ and $\mathbf{e}_2^{i,k}$. The corresponding unit tangential vectors along these edges are defined as
\begin{align*}
\ttt^{i,k}_1\coloneqq \frac{\mathbf{e}_1^{i,k}}{\|\mathbf{e}_1^{i,k}\|_2} \text{ and } \ttt^{i,k}_2\coloneqq \frac{\mathbf{e}_2^{i,k}}{\|\mathbf{e}_2^{i,k}\|_2}
\end{align*}
with $\|\cdot\|_2$ the Euclidean norm in $\mathbb{R}^3$. 
These unit vectors define the local orientation and metric of the face. Their cross product, $\ttt^{i,k}_1\times \ttt^{i,k}_2$, is orthogonal to the face, and its magnitude is related to the sine of the interior angle between the edges. We denote $\mathbf{V}_{i,\ell}^k\subset C(\partial\omega_{i}^{k})$ as the space formulated by the Haar wavelets along the tangential directions on each face $\partial\omega_{i}^{k}$ up to level $\ell$, i.e., 
\begin{equation*}
	\mathbf{V}_{i,\ell}^k\coloneqq\text{span}\{\Psi^{i,k}_{l,q}\ttt_n^{i,k}:1\leq l,q\leq2^{\ell}\text{ and }  1\leq n\leq 2\},
\end{equation*}
where $\Psi^{i,k}_{l,q}\coloneqq\psi_{l,1}^{i,k}\psi_{q,2}^{i,k}$, and $\psi_{l,\ast}^{i,k}$ is the Haar wavelet along the edge $\mathbf{e}_{\ast}^{i,k}$ with level up to $\ell$. Then the local face space $\mathbf{V}_{i,\ell}$ defined over $\partial\omega_i$ is the smallest space having $\mathbf{V}_{i,\ell}^k$ as a subspace, which can be represented by  
\begin{align}\label{eq:local-edge}
	\mathbf{V}_{i,\ell} := \oplus_{k=1}^{m_i}\mathbf{V}_{i,\ell}^k.
\end{align}
Next, we introduce the local multiscale space over each coarse neighborhood $\omega_i$, 
\begin{align}\label{eq:local-multiscale}
	\mathcal{L}^{-1}_{i}(\mathbf{V}_{i,\ell})
	:= \text{span} \left\{\mathcal{L}^{-1}_{i,n}(\Psi^{i,k}_{l,q}): \,  \,  1\leq l,q\leq2^{\ell}, 1\leq n\leq2, 1\leq k\leq m_i\right\}.
\end{align}
Here, $\mathcal{L}^{-1}_{i,n} (\Psi^{i,k}_{l,q}):=\ww_{l,q,n}^{i,k}\in \mathbf{H}_{\op{imp}}(\mathrm{curl};\omega_i)$, is defined by
\begin{equation}
	\label{eq:Li 2}
	a_i(\ww_{l,q,n}^{i,k},\vv)=\langle\Psi^{i,k}_{l,q}\ttt_n^{i,k},\vv_T\rangle_{\partial\omega_i^k}\qquad\forall\vv\in H_{\op{imp}}(\op{curl};\omega_i).
	\end{equation}
Finally, the multiscale ansatz space is defined by the Partition of Unity $\{\chi_i\}_{i=1}^N$, which is $\mathbf{H}(\op{curl};D)$-conforming,
\begin{align}\label{eq:global-multiscale}
	\mathbf{V}_{\text{ms},\ell} := \text{span} \left\{\chi_i\mathcal{L}^{-1}_{i}(\mathbf{V}_{i,\ell}) : \,  \,  1 \leq i \leq N\right\}.
\end{align}
\subsection*{$\mathbf{H}(\op{curl};D)$-conforming Galerkin formulation}
We seek $\uu^I_{\text{ms},\ell}\in \mathbf{V}_{\text{ms},\ell}$, s.t.,
\begin{align}\label{eqn:weakform_h-romb}
	a\left(\uu^I_{\text{ms},\ell},\vv_{\text{ms},\ell}\right)=\langle\boldsymbol{g},\vv_{\text{ms},\ell}\rangle_{\partial D}-a\left(\uu_B,\vv_{\text{ms},\ell}\right)
	\quad \forall \vv_{\text{ms},\ell}\in \mathbf{V}_{\text{ms},\ell}.
\end{align}
Then the approximation to $\uu$ is
\begin{align}\label{eq:ms-soln}
\uu_{\text{ms},\ell}\coloneqq\uu_B+\uu^I_{\text{ms},\ell}.
\end{align}
Our main algorithm is summarized in Algorithm \ref{algorithm:wavelet}.

\begin{algorithm}[H]
	\caption{Wavelet-based Edge Multiscale Finite Element Method (WEMsFEM)}
	\label{algorithm:wavelet}
	\begin{algorithmic}[1]
		\KwData{The level parameter $\ell\in \mathbb{N}$; coarse neighborhood $\omega_i$ and its $m_i$ mutually disjoint faces $\partial\omega_{i}^{k}$; the subspace $\mathbf{V}_{i,\ell}^k\subset L^2(\partial\omega_{i}^{k})$ up to level $\ell$ on each coarse face $\partial\omega_{i}^{k}$.}
		\KwResult{$\uu_{\text{ms},\ell}$}
		
		\State Construct the local face space $\mathbf{V}_{i,\ell}$ \eqref{eq:local-edge}.
		\State Calculate the local multiscale space $\mathcal{L}^{-1}_{i}(\mathbf{V}_{i,\ell})$ \eqref{eq:local-multiscale}.
		\State Construct the global multiscale space $\mathbf{V}_{\text{ms},\ell}$ \eqref{eq:global-multiscale}.
		\State Calculate the local particular solutions $\{\uu^{i}_B\}_{\Gamma_i\neq\emptyset}$ \eqref{eq:loc-splitting-b} and formulate the global particular solution
		$\uu_B=\sum_{\Gamma_i\neq\emptyset}\left(\chi_i \uu^{i}_B\right)$.
		\State Solve for $\uu^I_{\text{ms},\ell}\in \mathbf{V}_{\text{ms},\ell}$ from \eqref{eqn:weakform_h-romb}.
		\State Formulate the approximation $\uu_{\text{ms},\ell}=\uu_B+\uu^I_{\text{ms},\ell}$.
	\end{algorithmic}
\end{algorithm}

\section{Error estimation}\label{sec:error}

We present the error bound for Algorithm \ref{algorithm:wavelet}, which is divided into four steps.
First, we derive an {\it a priori} estimate for the local problem \eqref{eq:very-weak}, establishing the local approximation properties of the local multiscale space $\mathcal{L}^{-1}_i(\mathbf{V}_{i,\ell})$.
Second, we derive the approximation properties of the edge multiscale ansatz space $\mathbf{V}_{\text{ms},\ell}$ \eqref{eq:global-multiscale}. Third, we provide the global
approximation properties of the ansatz space in Corollary \ref{thm:last}. Finally, the error bound of Algorithm \ref{algorithm:wavelet} is presented in Theorem \ref{wellpose}.
\subsection{Local error estimate}\label{sec:appendix}
Let $\omega_i$ be a coarse neighborhood for any
$i=1,\cdots,N$ and let 
$X_T(\omega_i)\coloneqq \mathbf{H}_0(\operatorname{curl};\omega_i)\cap \mathbf{H}(\operatorname{div }0;\omega_i)$. Define
\begin{align*}
\Cpoin{\omega_i}\coloneqq H^{-2}\sup\limits_{\boldsymbol{z}\in X_T(\omega_i)\backslash\{\zzz\}}\frac{\Vert\boldsymbol{z}\Vert_{L^2(\omega_i)}^2}
{\Vert\op{curl}\boldsymbol{z}\Vert_{L^2(\omega_i)}^2}.
\end{align*}
Then the positive constants $\Cpoin{\omega_i}$ is independent of 
the coarse mesh $\mathcal{T}_{H}$. Note that we will utilize the same
constant $\Cpoin{\omega_i}$ to denote the constant from the so-called Maxwell estimate for vector fields, cf.  \cite[Lemma 1]{neff2012canonical} and \cite[Corollary 3.2]{schweizer2018friedrichs}.

\begin{assumption}[Scale Resolution Assumption]\label{ass:resolution}
The coarse mesh size $H$ is sufficiently small, s.t., 
\begin{align*}
\max_{i=1,2,\ldots,N}\Cpoin{\omega_i}\bar{\epsilon}_{i}(Hk)^{2}<1.
\end{align*}
\end{assumption}
Under Assumption \ref{ass:resolution}, we introduce a constant independent of $H$ and $k$, given by 
\begin{align}\label{ass:res-constant}
\Ce&:=\left(1-\max_{i=1,2,\ldots,N}\Cpoin{\omega_i}\bar{\epsilon}_{i}(Hk)^2\right)^{-1}.
\end{align}
Definition \eqref{ass:res-constant} together with Assumption \ref{ass:resolution} implies $\Ce>1$. 

Next, we introduce a nonstandard variational formulation 
in the spirit of the transposition method \cite{MR0350177} (for second-order elliptic
PDEs of divergence form) for the following problem,
\begin{equation}\label{eq:very-weak}
\left\{
\begin{aligned}
\mathcal{L}_i\vv&=\zzz&&\text{ in }\omega_i\\
\mathcal{B} \vv&=\ff &&\text{ on }\partial\omega_i,
\end{aligned}
\right.
\end{equation}
where the boundary data $\ff\in L^2_{\ttt}(\partial\omega_i)$, i.e., $\ff\in L^2(\partial\omega_i)^3$ and $\ff\cdot\nn=0$ on $\partial\omega_i$, and $\mathcal{L}_i\coloneqq\op{curl}\left({\epsilon^{-1
}}\op{curl}\cdot\right)-k^2$. We aim to establish its well-posedness in a suitable functional space and derive an {\it a priori} estimate. 

To begin with, we introduce several functional spaces. 
\begin{align*}
\mathbf{H}(\mathrm{curl};\partial\omega_i):=\left\{\boldsymbol{z}\in \mathbf{H}(\operatorname{div }0;\omega_i):\;\nn\times\op{curl}\boldsymbol{z}\in L^2(\partial\omega_i)^3 \right\}, 
\end{align*}
 which is a subspace of $L^2(\omega_i)^3$  associated with norm 
\begin{align*}
\Vert\uu\Vert_{\mathbf{H}(\mathrm{curl};\partial\omega_i)}^2\coloneqq k\Vert\uu\Vert_{L^2(\omega_i)}^2+\Vert\nn\times\op{curl}\uu\Vert_{L^2(\partial\omega_i;\epsilon^{-1})}^2,
\end{align*}
where $\|\ff\|_{L^2(\cdot;\epsilon^{-1})}\coloneqq\|\epsilon^{-1}\ff\|_{L^2(\cdot)}$ denotes the weighted $L^2$ norm.
Then we define a local test space $X(\omega_i)$ by
\begin{align}\label{eq:test-space}
X(\omega_i):=\left\{\zz\in \mathbf{H}_0(\operatorname{curl};\omega_i): \mathcal{L}_i\zz\in\mathbf{H}(\mathrm{curl};\partial\omega_i)\right\},
\end{align}
endowed with the norm $\|\zz\|_{X(\omega_i)}^2\coloneqq\Vert\op{curl} \zz\Vert^2_{L^2(\omega_i;\epsilon^{-1/2})}
+\Vert\mathcal{L}_i\zz\Vert_{\mathbf{H}(\mathrm{curl};\partial\omega_i)}^2$ for any $\zz\in X(\omega_i)$.

Next, we introduce a sesquilinear form $c(\cdot,\cdot):\mathbf{H}(\mathrm{curl};\partial\omega_i)\times \mathbf{H}(\mathrm{curl};\partial\omega_i)\to \mathbb{C}$, and a linear form $b(\cdot)$ on $\mathbf{H}(\mathrm{curl};\partial\omega_i)$, defined by
\begin{align*}
c(\boldsymbol{w}_1,\boldsymbol{w}_2)&\coloneqq\int_{\omega_i}\boldsymbol{w}_1\cdot \boldsymbol{\overline{w}}_2\;\dx-ik^{-1}\int_{\partial\omega_i}\epsilon^{-2}(\nn\times\op{curl}\boldsymbol{w}_1)\cdot (\nn\times\op{curl}\overline{\zz(\ww_2)})\;\mathrm{d}s\\
b(\boldsymbol{z}(\boldsymbol{w}))&\coloneqq ik^{-1}\int_{\partial\omega_i}\epsilon^{-1}\ff\cdot (\nn\times\op{curl}\overline{\zz(\ww)})\;\mathrm{d}s.
\end{align*}
Here, for all $\ww\in\mathbf{H}(\mathrm{curl};\partial\omega_i)$, $\boldsymbol{z}(\ww)\in X(\omega_i)$ denotes the unique solution, satisfying
\begin{equation}\label{eq:pde-dual}
\left\{\begin{aligned}
\mathcal{L}_i (\boldsymbol{z}(\ww))&=\boldsymbol{w} && \text{ in } \omega_i,\\
\boldsymbol{z}(\ww)\times \nn&=\zzz &&\text{ on }\partial \omega_i,
\end{aligned}\right.
\end{equation}
where $\nn$ denotes the unit outward normal on $\partial \omega_i$. 

Hence, we introduce a nonstandard variational formulation for \eqref{eq:very-weak}.
\begin{lemma}\label{lem:nonstandard-var}
Let $\vv\in \mathbf{H}(\mathrm{curl};\partial\omega_i)$ be the solution to Problem \eqref{eq:very-weak}. Then it satisfies
\begin{equation}\label{eq:veryweak-weakform}
	\begin{aligned}
		c(\vv,\ww)=b(\zz(\ww)) \qquad\forall \ww\in \mathbf{H}(\mathrm{curl};\partial\omega_i).
	\end{aligned}
\end{equation}
Here, $\boldsymbol{z}(\ww)\in X(\omega_i)$ is defined in \eqref{eq:pde-dual}.
\end{lemma}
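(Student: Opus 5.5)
The plan is to obtain \eqref{eq:veryweak-weakform} by testing the dual problem \eqref{eq:pde-dual} against $\vv$ and integrating by parts twice, in the spirit of the transposition method. The first integration by parts is done in weak form through the local problem for $\vv$, so that no boundary term involving $\zz(\ww)$ itself appears; the second produces the only boundary term, which involves the tangential trace of $\vv$. Finally the impedance condition $\mathcal{B}\vv=\ff$ replaces that trace by $\epsilon^{-1}\op{curl}\vv\times\nn$ and $\ff$, which is exactly how the $\epsilon^{-2}$ term of $c$ and the right-hand side $b$ arise.

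\emph{Regularity setting.} I will interpret ``solution to \eqref{eq:very-weak}'' as a solution that is regular enough for the traces below to make sense. Concretely, $\vv\in\mathbf{H}_{\op{imp}}(\op{curl};\omega_i)\cap \mathbf{H}(\mathrm{curl};\partial\omega_i)$ satisfies the weak form $a_i(\vv,\boldsymbol{\varphi})=\langle \ff,\boldsymbol{\varphi}_T\rangle_{\partial\omega_i}$ for all $\boldsymbol{\varphi}$. Moreover $\op{div}\vv=0$ follows from $\mathcal{L}_i\vv=\zzz$ and $k\neq 0$.

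\emph{Steps.} Fix $\ww\in\mathbf{H}(\mathrm{curl};\partial\omega_i)$ and set $\zz=\zz(\ww)\in X(\omega_i)$, so that $\boldsymbol{q}:=\epsilon^{-1}\op{curl}\zz\in \mathbf{H}(\op{curl};\omega_i)$ because $\op{curl}\boldsymbol{q}=\ww+k^2\zz\in L^2$. I carry out the following computations in order.
\begin{enumerate}
\item Start from
\[
(\vv,\ww)_{\omega_i}=(\vv,\op{curl}\boldsymbol{q})_{\omega_i}-k^2(\vv,\zz)_{\omega_i}.
\]
\item Apply the Green formula for $\mathbf{H}(\op{curl})$ fields,
\[
(\vv,\op{curl}\boldsymbol{q})_{\omega_i}=(\op{curl}\vv,\boldsymbol{q})_{\omega_i}+\int_{\partial\omega_i}(\nn\times\vv)\cdot\overline{\boldsymbol{q}}\,\mathrm{d}s,
\]
up to the orientation convention, which I will fix carefully.
\item Since $\zz\in\mathbf{H}_0(\op{curl};\omega_i)$ has vanishing tangential trace, it is an admissible test function in the weak form of \eqref{eq:very-weak}, and its boundary term drops. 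This gives $(\epsilon^{-1}\op{curl}\vv,\op{curl}\zz)_{\omega_i}=k^2(\vv,\zz)_{\omega_i}$, so the volume terms cancel. Hence
\[
(\vv,\ww)_{\omega_i}=\int_{\partial\omega_i}\epsilon^{-1}(\nn\times\vv)\cdot\overline{\op{curl}\zz}\,\mathrm{d}s .
\]
\item Only tangential components enter, so rewrite the boundary integrand as $\pm\,\epsilon^{-1}\vv_T\cdot(\nn\times\overline{\op{curl}\zz})$ (up to a rotation by $\nn\times$). I will use the identities $(\nn\times\boldsymbol{a})\cdot\boldsymbol{b}=-\boldsymbol{a}\cdot(\nn\times\boldsymbol{b})$ and $(\boldsymbol{a}\times\nn)\cdot(\boldsymbol{b}\times\nn)=(\nn\times\boldsymbol{a})\cdot(\nn\times\boldsymbol{b})$.
\item Insert the impedance condition in the form
\[
\vv_T=(ik)^{-1}\bigl(\epsilon^{-1}\op{curl}\vv\times\nn-\ff\bigr)=-ik^{-1}\bigl(\epsilon^{-1}\op{curl}\vv\times\nn-\ff\bigr).
\]
The first part yields $\pm ik^{-1}\int_{\partial\omega_i}\epsilon^{-2}(\nn\times\op{curl}\vv)\cdot(\nn\times\overline{\op{curl}\zz})\,\mathrm{d}s$, which moves to the left-hand side to form $c(\vv,\ww)$. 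The $\ff$ part gives $ik^{-1}\int_{\partial\omega_i}\epsilon^{-1}\ff\cdot(\nn\times\overline{\op{curl}\zz})\,\mathrm{d}s=b(\zz(\ww))$.
\end{enumerate}

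\emph{Expected obstacle.} The main difficulty is bookkeeping rather than analysis: consistently tracking the orientation and sign conventions of $\nn\times\cdot$ versus $\cdot\times\nn$ through the Green formula and the impedance operator. The signs must come out as $-ik^{-1}$ in $c$ and $+ik^{-1}$ in $b$, and I will check them on the tangential decomposition at the end. A secondary point is justifying the boundary pairings as genuine $L^2(\partial\omega_i)$ integrals. This needs $\nn\times\op{curl}\zz\in L^2(\partial\omega_i)$, which I expect from the definition of $X(\omega_i)$, together with $\vv_T\in L^2(\partial\omega_i)$ and $\nn\times\op{curl}\vv\in L^2(\partial\omega_i)$. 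If only duality pairings are available, I would first prove the identity for smooth data and then pass to the limit by density.
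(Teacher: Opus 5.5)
Your proposal is correct and is essentially the paper's argument. The paper tests $\mathcal{L}_i\vv=\zzz$ with $\zz(\ww)$ and integrates by parts twice. It discards $\int_{\partial\omega_i}\epsilon^{-1}(\op{curl}\vv\times\nn)\cdot\overline{\zz(\ww)}\,\mathrm{d}s$ because $\zz(\ww)\times\nn=\zzz$, which is what your weak form with test function $\zz\in\mathbf{H}_0(\op{curl};\omega_i)$ achieves. It then converts $\int_{\partial\omega_i}\epsilon^{-1}\vv_T\cdot(\nn\times\op{curl}\overline{\zz(\ww)})\,\mathrm{d}s$ with the impedance condition.

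For the sign bookkeeping you deferred: the Green formula in your step 2 carries a minus sign,
$(\vv,\op{curl}\boldsymbol{q})_{\omega_i}=(\op{curl}\vv,\boldsymbol{q})_{\omega_i}-\int_{\partial\omega_i}(\nn\times\vv)\cdot\overline{\boldsymbol{q}}\,\mathrm{d}s$.
Hence $(\vv,\ww)_{\omega_i}=\int_{\partial\omega_i}\epsilon^{-1}\vv_T\cdot(\nn\times\op{curl}\overline{\zz})\,\mathrm{d}s$. Substituting $\vv_T=-ik^{-1}(\epsilon^{-1}\op{curl}\vv\times\nn-\ff)$ then yields exactly $-ik^{-1}$ in $c$ and $+ik^{-1}$ in $b$.
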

\begin{proof}
Testing \eqref{eq:very-weak} with $\boldsymbol{z}(\ww)$ and applying integration by parts, we obtain
	\begin{equation*}
		\begin{aligned}
			\int_{\omega_i}\mathcal{L}_i\boldsymbol{v}\cdot \overline{\boldsymbol{z}(\ww)} \;\dx&= \int_{\omega_i}\epsilon^{-1}  \op{curl} \boldsymbol{v} \cdot \op{curl} \overline{\boldsymbol{z}(\ww)}\;\dx-\int_{\omega_i} k^2 \boldsymbol{v} \cdot \overline{\boldsymbol{z}(\ww)} \;\dx \\&- \int_{\partial\omega_i}\epsilon^{-1}(\op{curl} \boldsymbol{v}\times \nn) \cdot \overline{\boldsymbol{z}(\ww)}\;\mathrm{d}s\\
			&=\int_{\omega_i}\boldsymbol{v} \cdot\mathcal{L}_i (\overline{\boldsymbol{z}(\ww)})\;\dx - \int_{\partial\omega_i}\epsilon^{-1}(\op{curl} \boldsymbol{v}\times \nn) \cdot \overline{\boldsymbol{z}(\ww)}\;\mathrm{d}s\\
			&-\int_{\partial\omega_i}\epsilon^{-1} (\boldsymbol{v}\times \nn) \cdot\op{curl} \overline{\boldsymbol{z}(\ww)}\;\mathrm{d}s
			=0.
		\end{aligned}
	\end{equation*}
Using the boundary conditions $\boldsymbol{z}(\ww) \times \nn = \zzz$ and $\mathcal{B}\vv=\boldsymbol{f}$ on $\partial\omega_i$, we derive
\begin{align*}
\int_{\partial\omega_i}\epsilon^{-1}(\op{curl} \boldsymbol{v}\times \nn) \cdot \overline{\boldsymbol{z}(\ww)}\;\mathrm{d}s&=\int_{\partial\omega_i}\epsilon^{-1}\op{curl} \boldsymbol{v} \cdot(\nn\times \overline{\boldsymbol{z}(\ww)})\;\mathrm{d}s=0,\\
\int_{\partial\omega_i}\epsilon^{-1} (\boldsymbol{v}\times \nn) \cdot\op{curl} \overline{\boldsymbol{z}(\ww)}\;\mathrm{d}s&=\int_{\partial\omega_i}\epsilon^{-1} \boldsymbol{v}_T \cdot(\nn\times\op{curl} \overline{\boldsymbol{z}(\ww)})\;\mathrm{d}s\nonumber\\
&=ik^{-1}\int_{\partial\omega_i}\epsilon^{-2}\left(\nn\times\op{curl} \boldsymbol{v}\right)\cdot(\nn\times\op{curl} \overline{\boldsymbol{z}(\ww)})\;\mathrm{d}s\\
&+ik^{-1}\int_{\partial\omega_i}\epsilon^{-1}\boldsymbol{f}\cdot(\nn\times\op{curl} \overline{\boldsymbol{z}(\ww)})\;\mathrm{d}s.
	 \end{align*}
	Combining the equalities above and rearranging the terms, we obtain \eqref{eq:veryweak-weakform}.
\end{proof}


The remaining of this section is devoted to proving the well-posedness of the nonstandard variational formulation \eqref{eq:veryweak-weakform} and deriving the {\it a priori} error estimate. To this end, one has to first derive the $L^2(\partial\omega_i)$-estimate of the normal trace $\nn\times\op{curl}\zz$ for any $\boldsymbol{z}\in X(\omega_i)$. This is established in the following theorem.
\begin{lemma}\label{thm:pw-Regularity}
Let $\boldsymbol{w}\in \mathbf{H}(\mathrm{curl};\partial\omega_i)$ and let $\boldsymbol{z}:=\zz(\ww)\in X(\omega_i)$ satisfy \eqref{eq:pde-dual}. 
Then $\nn\times\op{curl}{\zz}\in L^2(\partial\omega_i)^3$ and 
\begin{align*}
\Vert\nn\times\op{curl}\boldsymbol{z}\Vert_{L^2(\partial\omega_i;\epsilon^{-1})}\lesssim{\eta_i} H^{1/2}k^{-1/2}\Vert\boldsymbol{w}\Vert_{\mathbf{H}(\op{curl};\partial\omega_i)}
\end{align*}
with $\eta_i\coloneqq \Lambda_i+\bar{\epsilon}_{i}\delta_i H$.
\end{lemma}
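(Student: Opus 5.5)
The plan is to combine an interior energy bound for $\zz=\zz(\ww)$, obtained from the Maxwell estimate and Assumption \ref{ass:resolution}, with a Rellich-type multiplier identity on the star-shaped patch $\omega_i$. The multiplier identity converts the boundary quantity $\nn\times\op{curl}\zz$ into interior quantities. The factor $k^{-1/2}$ at the end comes simply from $\Vert\ww\Vert_{L^2(\omega_i)}\le k^{-1/2}\Vert\ww\Vert_{\mathbf{H}(\op{curl};\partial\omega_i)}$.

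\textbf{Step 1 (energy bound).} Since $\ww\in\mathbf{H}(\op{div }0;\omega_i)$, taking the divergence of \eqref{eq:pde-dual} gives $-k^2\nabla\cdot\zz=\nabla\cdot\ww=0$. Hence $\zz\in X_T(\omega_i)$, and the definition of $\Cpoin{\omega_i}$ yields
\[
\Vert\zz\Vert_{L^2(\omega_i)}^2\le \Cpoin{\omega_i}H^2\bar\epsilon_i\Vert\op{curl}\zz\Vert_{L^2(\omega_i;\epsilon^{-1/2})}^2 .
\]
Testing \eqref{eq:pde-dual} with $\zz$ (the boundary term vanishes because $\zz\times\nn=\zzz$) gives $\Vert\op{curl}\zz\Vert^2_{L^2(\omega_i;\epsilon^{-1/2})}-k^2\Vert\zz\Vert^2_{L^2(\omega_i)}=(\ww,\zz)_{\omega_i}$. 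By Assumption \ref{ass:resolution}, the negative term is absorbed with the factor $\Ce$. This gives
\[
\Vert\op{curl}\zz\Vert_{L^2(\omega_i;\epsilon^{-1/2})}\lesssim \Ce\,\bar\epsilon_i^{1/2}H\Vert\ww\Vert_{L^2(\omega_i)},\qquad k\Vert\zz\Vert_{L^2(\omega_i)}\lesssim \Ce\,\bar\epsilon_i\,(Hk)H\Vert\ww\Vert_{L^2(\omega_i)} .
\]
Since $Hk\lesssim 1$ under the resolution assumption, we also get $k^2\Vert\zz\Vert_{L^2(\omega_i)}\lesssim \bar\epsilon_i\Vert\ww\Vert_{L^2(\omega_i)}$.

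\textbf{Step 2 (Rellich identity).} Set $\mathbf q:=\op{curl}\zz$ and $\boldsymbol\sigma:=\epsilon^{-1}\mathbf q$, so that $\op{curl}\boldsymbol\sigma=k^2\zz+\ww$. Because $\omega_i$ is a union of parallelepipeds around the node $O_i$, it is star-shaped with respect to $O_i$. Take $\mathbf m(\xx)=\xx-O_i$, so that $|\mathbf m|\lesssim H$, $\nabla\mathbf m=I$ and $\mathbf m\cdot\nn\gtrsim H$ on $\partial\omega_i$ (by shape regularity). Then compute $2\Re\int_{\omega_i}\op{curl}\boldsymbol\sigma\cdot(\overline{\mathbf q}\times\mathbf m)\,\dx$ in two ways: directly, and via integration by parts together with the identity $\nabla(\mathbf m\cdot\ldots)$ for $\epsilon^{-1}|\mathbf q|^2$. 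The condition $\zz\times\nn=\zzz$ gives $\mathbf q\cdot\nn=\op{div}_\Gamma(\zz\times\nn)=0$, so $\mathbf q$ is tangential on $\partial\omega_i$. The boundary terms therefore collapse to a positive multiple of $\int_{\partial\omega_i}\epsilon^{-1}|\nn\times\mathbf q|^2(\mathbf m\cdot\nn)\,\mathrm ds$. The interior terms are:
\begin{itemize}
\item $\int\epsilon^{-1}|\mathbf q|^2\op{div}\mathbf m$ and $\int\epsilon^{-1}\mathbf q\cdot(\nabla\mathbf m)\overline{\mathbf q}$, both bounded by $\Vert\mathbf q\Vert^2_{L^2(\omega_i;\epsilon^{-1/2})}$;
\item $\int(\mathbf m\cdot\nabla\epsilon^{-1})|\mathbf q|^2$, bounded by $\delta_iH\bar\epsilon_i\Vert\mathbf q\Vert^2_{L^2(\omega_i;\epsilon^{-1/2})}$;
\item $\int(k^2\zz+\ww)\cdot(\overline{\mathbf q}\times\mathbf m)$, bounded by $H\Vert\mathbf q\Vert\,(k^2\Vert\zz\Vert+\Vert\ww\Vert)$.
\end{itemize}
Dividing by $\min(\mathbf m\cdot\nn)\gtrsim H$ and inserting Step 1 gives
\[
\int_{\partial\omega_i}\epsilon^{-1}|\nn\times\mathbf q|^2\,\mathrm ds\lesssim H^{-1}(1+\bar\epsilon_i\delta_iH)\,\bar\epsilon_iH^2\Vert\ww\Vert^2+\bar\epsilon_i^{1/2}\cdot\bar\epsilon_i H\Vert\ww\Vert^2 .
\]
Passing from this weight to the $\epsilon^{-2}$ weight in $\Vert\cdot\Vert_{L^2(\partial\omega_i;\epsilon^{-1})}$ costs a factor $\underline\epsilon_i^{-1}$. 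Collecting the factors of $\bar\epsilon_i$ and $\underline\epsilon_i$ (using $\epsilon\ge1$) should produce $\eta_i^2=(\Lambda_i+\bar\epsilon_i\delta_iH)^2$ times $H\Vert\ww\Vert^2_{L^2(\omega_i)}$. Finally apply $\Vert\ww\Vert^2_{L^2(\omega_i)}\le k^{-1}\Vert\ww\Vert^2_{\mathbf{H}(\op{curl};\partial\omega_i)}$ and take square roots.

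\textbf{Main obstacle.} The hard part is justifying the Rellich identity, since a priori $\zz$ is only in $\mathbf{H}_0(\op{curl};\omega_i)$. I would first prove the identity for a regularized problem with smooth data and then pass to the limit: use $\epsilon\in W^{1,\infty}$, smooth the corners by domain approximation with uniformly bounded $\mathbf m\cdot\nn$, and use $\mathbf q\in \mathbf H(\op{curl})$ with $\op{curl}\boldsymbol\sigma\in L^2$. The uniform bound then shows that $\nn\times\mathbf q\in L^2(\partial\omega_i)^3$. A secondary difficulty is tracking the contrast powers carefully so that exactly $\eta_i$ appears; if the bookkeeping gives a slightly different power of $\bar\epsilon_i$, I would redistribute it using $1\le\underline\epsilon_i\le\bar\epsilon_i$.
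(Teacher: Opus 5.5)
Your multiplier argument is a genuinely different route from the paper's, and its core identity is correct. Using $\nabla\cdot\mathbf{q}=0$ in $\omega_i$ and $\mathbf{q}\cdot\nn=0$ on $\partial\omega_i$, one gets exactly
\[
\int_{\partial\omega_i}\epsilon^{-1}(\mathbf{m}\cdot\nn)|\mathbf{q}|^2\,\mathrm{d}s=\int_{\omega_i}\epsilon^{-1}|\mathbf{q}|^2\dx-\int_{\omega_i}(\mathbf{m}\cdot\nabla\epsilon^{-1})|\mathbf{q}|^2\dx-2\Re\int_{\omega_i}(k^2\zz+\ww)\cdot(\overline{\mathbf{q}}\times\mathbf{m})\dx .
\]
The paper uses no multiplier. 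It notes that $\boldsymbol\sigma=\epsilon^{-1}\op{curl}\zz\in\mathbf{H}(\op{curl};\omega_i)\cap\mathbf{H}_0(\op{div};\omega_i)$ with $\nabla\cdot\boldsymbol\sigma=\nabla\epsilon^{-1}\cdot\op{curl}\zz$. Friedrichs' second inequality on the convex patch then gives $\boldsymbol\sigma\in H^1(\omega_i)$ with $\Vert\nabla\boldsymbol\sigma\Vert_{L^2(\omega_i)}\lesssim\delta_i\Vert\op{curl}\zz\Vert_{L^2(\omega_i)}+\Vert\op{curl}\boldsymbol\sigma\Vert_{L^2(\omega_i)}$, and a scaled trace inequality finishes. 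Your identity needs only $L^2$ information and star-shapedness, and done carefully it yields the factor $\Lambda_i^{1/2}(1+\bar\epsilon_i\delta_iH)^{1/2}\le\eta_i$. Two slips must be repaired to get there.

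First, $\mathbf{m}=\xx-O_i$ fails when $O_i\in\partial D$: then $O_i\in\partial\omega_i$ and $\mathbf{m}\cdot\nn=0$ on every face of $\omega_i$ through $O_i$. Take instead $\mathbf{m}=\xx-\xx_i$, with $\omega_i$ star-shaped with respect to a ball of radius $\simeq H$ centred at $\xx_i$ (e.g.\ the centre of a box-shaped patch).

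Second, bounding $k^2\Vert\zz\Vert_{L^2(\omega_i)}\lesssim\bar\epsilon_i\Vert\ww\Vert_{L^2(\omega_i)}$ via $Hk\lesssim 1$ leaves a surplus power of $\bar\epsilon_i$. This cannot be redistributed through $1\le\underline{\epsilon}_i\le\bar\epsilon_i$: for locally constant, large $\epsilon$ one has $\Lambda_i=1$ and $\eta_i=1+\bar\epsilon_iH$. Use instead, as the paper does, $k^2\Vert\zz\Vert_{L^2(\omega_i)}\le\Ce\Cpoin{\omega_i}\bar\epsilon_i(Hk)^2\Vert\ww\Vert_{L^2(\omega_i)}\le\Ce\Vert\ww\Vert_{L^2(\omega_i)}$, together with $\Vert\mathbf{q}\Vert_{L^2(\omega_i)}\le\bar\epsilon_i^{1/2}\Vert\mathbf{q}\Vert_{L^2(\omega_i;\epsilon^{-1/2})}$. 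The cross term then contributes only $\Lambda_iH\Vert\ww\Vert^2_{L^2(\omega_i)}$ after the change of weight.

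The genuine gap is the justification of the identity, and it is not a technicality. The claim $\nn\times\op{curl}\zz\in L^2(\partial\omega_i)^3$ is itself part of the lemma. Your identity presupposes an $L^2$ boundary trace of $\mathbf{q}$ and integration by parts against $(\mathbf{m}\cdot\nabla)\overline{\mathbf{q}}$, neither of which is automatic for $\zz\in X(\omega_i)$. The smoothing-and-limit plan is only announced. Its real difficulties are those of a Ne\v{c}as/Costabel-type argument, and none is addressed:
\begin{itemize}
\item the Maxwell constant and Assumption~\ref{ass:resolution} must hold uniformly on the approximating domains;
\item the approximate solutions must converge;
\item one must pass to the limit in traces on moving boundaries.
\end{itemize}

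On convex (box) patches the gap closes in one line, but only by borrowing the paper's key step. There $\boldsymbol\sigma\in H^1(\omega_i)$, hence $\mathbf{q}=\epsilon\boldsymbol\sigma\in H^1(\omega_i)$ since $\epsilon\in W^{1,\infty}(D)$, and the identity follows by density. At that point the scaled trace inequality already gives the estimate, so the multiplier becomes optional. It would be the right tool only on non-convex star-shaped patches (e.g.\ at re-entrant edges of a non-convex $D$), where the paper's $H^1$ argument is unavailable. There, however, the approximation argument would have to be carried out in full.
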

\begin{proof}
Note that $\boldsymbol{z}\in X(\omega_i)$ satisfies
\begin{equation*}
    \forall \boldsymbol{q}\in X(\omega_i): \int_{\omega_i}\mathcal{L}_i^{*}(z) \cdot\boldsymbol{\bar{q}}\;\dx=\int_{\omega_i}\boldsymbol{w}\cdot\boldsymbol{\bar{q}}\dx.
\end{equation*}
Taking $\boldsymbol{q}:=\boldsymbol{z}$ and applying integration by parts, we arrive at
\begin{align*}
\int_{\omega_i}\epsilon^{-1
}|\op{curl}\boldsymbol{z}|^2\dx&=\int_{\omega_i}k^2|\boldsymbol{z}|^2\dx+\int_{\omega_i}\boldsymbol{w}\cdot \boldsymbol{\bar{z}}\dx.
\end{align*}
Then an application of the Cauchy–Schwarz inequality reveals,
\begin{equation*}  
\bar{\epsilon}_{i}^{-1}\Vert\op{curl}\boldsymbol{z}\Vert_{L^2(\omega_i)}^2\leq\Vert\op{curl}\boldsymbol{z}\Vert_{L^2(\omega_i;\epsilon^{-1/2})}^2\leq k^2 \Vert\boldsymbol{z}\Vert_{L^2(\omega_i)}^2+\Vert\boldsymbol{w}\Vert_{L^2(\omega_i)}\cdot\Vert\boldsymbol{z}\Vert_{L^2(\omega_i)}.
\end{equation*}
Note that $\boldsymbol{z}$ satisfies \eqref{eq:pde-dual}, together with the fact that $\boldsymbol{w}\in \mathbf{H}(\operatorname{div }0;\omega_i)$, implying that $\boldsymbol{z}$ is divergence free, i.e., $\boldsymbol{z}\in \mathbf{H}(\text{div }0,\omega_i)$. Applying the so-called Maxwell estimate for vector fields in \cite[Lemma 3.4]{MR548867} and \cite[Corollary 3.2]{schweizer2018friedrichs}, we derive
\begin{align*} 
 \bar{\epsilon}_{i}^{-1}\Vert\op{curl}\boldsymbol{z}\Vert_{L^2(\omega_i)}^2&\leq\Cpoin{\omega_i}(Hk)^2 \Vert\op{curl}\boldsymbol{z}\Vert_{L^2(\omega_i)}^2\\&+\Cpoin{\omega_i}^{1/2}H\Vert\boldsymbol{w}\Vert_{L^2(\omega_i)}\cdot\Vert\op{curl}\boldsymbol{z}\Vert_{L^2(\omega_i)}.
\end{align*}
This further yields
\begin{align}\label{eq:gradient}
	\Vert\op{curl}\boldsymbol{z}\Vert_{L^2(\omega_i)}\leq\Ce\bar{\epsilon}_{i}\Cpoin{\omega_i}^{1/2}H \Vert\boldsymbol{w}\Vert_{L^2(\omega_i)},
\end{align}
where $\Ce$ is defined in \eqref{ass:res-constant}. 

Equation \eqref{eq:pde-dual} together with \eqref{ass:res-constant}, \eqref{eq:gradient}  and the Maxwell estimate for vector fields, leads to
\begin{align}
\Vert\op{curl}(\epsilon^{-1}\op{curl}\boldsymbol{z})\Vert_{L^2(\omega_i)}&\leq k^2\Vert\boldsymbol{z}\Vert_{L^2(\omega_i)}+\Vert\boldsymbol{w}\Vert_{L^2(\omega_i)}\nonumber\\	
	&\leq k^2\Cpoin{\omega_i}^{1/2}H\Vert\op{curl}\boldsymbol{z}\Vert_{L^2(\omega_i)}+\Vert\boldsymbol{w}\Vert_{L^2(\omega_i)}\nonumber\\
	&\leq (\Ce\Cpoin{\omega_i}\bar{\epsilon}_i(Hk)^2+1)\Vert\boldsymbol{w}\Vert_{L^2(\omega_i)}\nonumber\\
	&< (\Ce+1)\Vert\boldsymbol{w}\Vert_{L^2(\omega_i)}.\label{eq:finalxxxxx}
\end{align}
Since $\boldsymbol{z}\in \mathbf{H}_0(\operatorname{curl};\omega_i)$ and $\epsilon$ is a scalar, by the de Rham diagram \cite[Section 3.7]{Monk2003}, we derive 
\begin{align*}
\epsilon^{-1}\op{curl}\boldsymbol{z}\in \mathbf{H}(\operatorname{curl};\omega_i)\cap\mathbf{H}_0(\operatorname{div };\omega_i).
\end{align*}
Hence, Friedrichs second inequality on convex domain \cite{MR753060} implies
$\epsilon^{-1}\op{curl}\boldsymbol{z}\in H^1(\omega_i)$, and thus $\nn\times\op{curl}\boldsymbol{z}\in H^{1/2}(\partial\omega_i;\epsilon^{-1})$. Friedrichs second inequality on convex domain \cite{MR753060}, combining with \eqref{eq:gradient} and \eqref{eq:finalxxxxx}, implies
\begin{align}
\|\nabla(\epsilon^{-1}\op{curl}\boldsymbol{z})\|_{L^2(\omega_i)}&\lesssim \|\nabla\cdot(\epsilon^{-1}\op{curl}\boldsymbol{z})\|_{L^2(\omega_i)}+\|\op{curl}(\epsilon^{-1}\op{curl}\boldsymbol{z})\|_{L^2(\omega_i)}\nonumber\\
&\lesssim \delta_i \Vert\op{curl}\boldsymbol{z}\Vert_{L^2(\omega_i)}+(\Ce+1)\Vert\boldsymbol{w}\Vert_{L^2(\omega_i)}\nonumber\\
&\lesssim (\bar{\epsilon}_i\delta_i H+1)\Vert\boldsymbol{w}\Vert_{L^2(\omega_i)}.\label{eq:finalyyyy}
\end{align}
Moreover, a combination of the trace inequality \cite[Theorem 1.5.1.10]{grisvard2011elliptic} and  \eqref{eq:gradient} leads to
\begin{align*}
\Vert\nn\times\op{curl}\boldsymbol{z}\Vert_{L^2(\partial\omega_i;\epsilon^{-1})}
&\lesssim H^{-1/2}\|{\epsilon^{-1}\op{curl}\boldsymbol{z}}\|_{L^2(\omega_i)}
+H^{1/2}\|\nabla(\epsilon^{-1}\op{curl}\boldsymbol{z})\|_{L^2(\omega_i)}\\
&\lesssim \bar{\epsilon}_i(\underline{\epsilon}_{i}^{-1}+\delta_i H)H^{1/2}\Vert\boldsymbol{w}\Vert_{L^2(\omega_i)}.
\end{align*}
This proves the desired assertion.
\end{proof}

Thanks to this {\it a priori} estimate presented in Lemma \ref{thm:pw-Regularity}, we are ready to state the well-posedness of the nonstandard variational formulation \eqref{eq:veryweak-weakform}.
\begin{lemma}\label{lem:well-posedness-nonstandard}
Let $\ff\in L^2_{\ttt}(\partial\omega_i)^3$ and let the test space $X(\omega_i)$ be defined in \eqref{eq:test-space}. Then the nonstandard variational form \eqref{eq:veryweak-weakform} is well posed.
\end{lemma}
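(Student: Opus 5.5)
The plan is to apply the Lax--Milgram lemma on the Hilbert space $\mathbf{H}(\mathrm{curl};\partial\omega_i)$, equipped with the norm $\Vert\cdot\Vert_{\mathbf{H}(\mathrm{curl};\partial\omega_i)}$.

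First I would check that the map $\ww\mapsto\zz(\ww)$ in \eqref{eq:pde-dual} is well defined, linear and bounded. Assumption \ref{ass:resolution} gives $\Cpoin{\omega_i}\bar\epsilon_i(Hk)^2<1$. Hence on $\mathbf{H}_0(\op{curl};\omega_i)\cap\mathbf{H}(\op{div}0;\omega_i)$ the form $(\epsilon^{-1}\op{curl}\cdot,\op{curl}\cdot)-k^2(\cdot,\cdot)$ is coercive, by the Maxwell estimate exactly as in the proof of Lemma \ref{thm:pw-Regularity}. Since $\ww$ is divergence free, $\zz(\ww)$ exists and is unique. Lemma \ref{thm:pw-Regularity} then gives
\[
\Vert\nn\times\op{curl}\zz(\ww)\Vert_{L^2(\partial\omega_i;\epsilon^{-1})}\lesssim\eta_iH^{1/2}k^{-1/2}\Vert\ww\Vert_{\mathbf{H}(\op{curl};\partial\omega_i)} .
\]
From this I get boundedness of $b$ by Cauchy--Schwarz,
\[
|b(\zz(\ww))|\le k^{-1}\Vert\ff\Vert_{L^2(\partial\omega_i)}\Vert\nn\times\op{curl}\zz(\ww)\Vert_{L^2(\partial\omega_i;\epsilon^{-1})}.
\]
The same argument gives continuity of $c$, because both of its terms are controlled by the norm of $\mathbf{H}(\mathrm{curl};\partial\omega_i)$ (the second factor again via Lemma \ref{thm:pw-Regularity}). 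Along the way I would also confirm that $\mathbf{H}(\mathrm{curl};\partial\omega_i)$ is complete, treating it as a closed subspace of the graph-norm space.

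The main step is coercivity of $c$. I would evaluate $c(\ww,\ww)$ and use the definition of $\zz(\ww)$ to rewrite the boundary term. Green's formula, as in the proof of Lemma \ref{lem:nonstandard-var}, with $\zz\times\nn=\zzz$ and $\mathcal{L}_i\zz=\ww$, relates $\int_{\omega_i}\ww\cdot\bar\zz$ to boundary pairings of $\nn\times\op{curl}\ww$ against $\nn\times\op{curl}\zz$. The structure I expect is that the real part of $c(\ww,\ww)$ gives $\Vert\ww\Vert_{L^2}^2$, while the imaginary part gives $k^{-1}\Vert\nn\times\op{curl}\ww\Vert^2_{L^2(\partial\omega_i;\epsilon^{-1})}$, up to a sign and cross terms. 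I would then estimate $|c(\ww,\ww)|\ge\frac{1}{\sqrt2}(|\Re c|+|\Im c|)$ and absorb the cross terms using the smallness provided by Assumption \ref{ass:resolution} and the $H^{1/2}$ factor in Lemma \ref{thm:pw-Regularity}.

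If the direct coercivity computation fails, my fallback is a Babu\v{s}ka--Ne\v{c}as inf-sup argument. I would test with $\ww$ itself, and if needed with a rotated test function $e^{i\theta}\ww$, and prove uniqueness separately: if $c(\vv,\ww)=0$ for all $\ww$, choose $\ww$ solving a suitable adjoint problem to conclude $\vv=\zzz$. This step is the main obstacle. The scaling mismatch between $k\Vert\cdot\Vert_{L^2}^2$ in the norm and the unweighted $L^2$ term in $c$ must be handled by the $k$-weights, and the precise constant will carry the factor $\eta_i^2Hk$ or similar. Once the inf-sup bound holds, Lax--Milgram (or Babu\v{s}ka) gives existence, uniqueness and the a priori bound $\Vert\vv\Vert_{\mathbf{H}(\mathrm{curl};\partial\omega_i)}\lesssim\eta_iH^{1/2}k^{-1/2}\Vert\ff\Vert_{L^2(\partial\omega_i)}$.
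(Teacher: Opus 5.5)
Your boundedness steps are fine, and so is your check that Assumption \ref{ass:resolution} makes \eqref{eq:pde-dual} uniquely solvable on divergence-free fields (the paper does not address this). The gap is the coercivity step, which you flag as the crux: it cannot be carried out, because $c$ does not have the structure you expect. The boundary term of $c(\ww,\ww)$ pairs $\nn\times\op{curl}\ww$ with $\nn\times\op{curl}\zz(\ww)$, not with itself. Green's formula for $\mathcal{L}_i\zz=\ww$, $\zz\times\nn=\zzz$ gives only the interior identity $(\ww,\zz)_{\omega_i}=\Vert\op{curl}\zz\Vert^2_{L^2(\omega_i;\epsilon^{-1/2})}-k^2\Vert\zz\Vert^2_{L^2(\omega_i)}$, and never produces $\Vert\nn\times\op{curl}\ww\Vert^2$. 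The proof of Lemma \ref{thm:pw-Regularity} bounds $\nn\times\op{curl}\zz(\ww)$ by $\Vert\ww\Vert_{L^2(\omega_i)}$ alone, so all you can get is
\[
|c(\ww,\ww)|\le\Vert\ww\Vert^2_{L^2(\omega_i)}+C\eta_iH^{1/2}k^{-1}\Vert\ww\Vert_{L^2(\omega_i)}\Vert\nn\times\op{curl}\ww\Vert_{L^2(\partial\omega_i;\epsilon^{-1})}.
\]
There is no square of the trace into which cross terms could be absorbed, and coercivity in this norm is in fact false. Place a face of $\omega_i$ in $\{x_3=0\}$ with $\omega_i\subset\{x_3>0\}$, and set $\ww_\delta:=\op{curl}\bigl(0,0,\delta\rho(x_1,x_2)\mu(x_3/\delta)\bigr)$. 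Here $\rho$ is smooth and compactly supported in that face, and $\mu$ is smooth on $[0,\infty)$, vanishes on $[1,\infty)$, and has $\mu'(0)=1$. Then $\ww_\delta$ is divergence free and $\Vert\ww_\delta\Vert_{L^2(\omega_i)}=O(\delta^{3/2})$. For small $\delta$ its tangential curl trace is $(\partial_1\rho,\partial_2\rho,0)$ on that face and zero elsewhere, independently of $\delta$. Hence $|c(\ww_\delta,\ww_\delta)|\to0$ while $\Vert\ww_\delta\Vert_{\mathbf{H}(\mathrm{curl};\partial\omega_i)}$ stays of order one. So no bound $|c(\ww,\ww)|\ge\alpha\Vert\ww\Vert^2_{\mathbf{H}(\mathrm{curl};\partial\omega_i)}$ can hold, with or without rotation.

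The same sequence tends to $\zzz$ in $L^2(\omega_i)$ with a fixed nonzero $\nn\times\op{curl}\ww_\delta$, so it is Cauchy in $\mathbf{H}(\mathrm{curl};\partial\omega_i)$ but has no limit. The space is therefore not complete and cannot be a closed subspace of a graph-norm space, which rules out Lax--Milgram altogether. Your Babu\v{s}ka--Ne\v{c}as fallback meets the same obstruction. Let $T\ww:=\nn\times\op{curl}\zz(\ww)$, which is bounded on $L^2(\omega_i)$, with $L^2$-adjoint $T^*$. Then $c(\vv,\ww)=\bigl(\vv-ik^{-1}T^*(\epsilon^{-2}\nn\times\op{curl}\vv),\ww\bigr)_{\omega_i}$. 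So the supremum over $\ww$ is at most $k^{-1/2}$ times an $L^2(\omega_i)$ norm, and this norm does not control the trace part of $\Vert\vv\Vert_{\mathbf{H}(\mathrm{curl};\partial\omega_i)}$. To see this, take $\vv$ to be $ik^{-1}T^*(\epsilon^{-2}\boldsymbol{a})$ plus a boundary layer chosen so that $\nn\times\op{curl}\vv=\boldsymbol{a}$; that $L^2$ norm is then just the norm of the layer. The paper does not supply the missing step either. It sets $\boldsymbol{q}:=\op{curl}(ik\overline{\ww})-\op{curl}\overline{\zz(\ww)}$, which is only a definition. It then picks $\theta$ so that the $\boldsymbol{q}$-term has nonnegative real part while $\Re e^{i\theta}$ stays bounded below, but that $\theta$ depends on $\ww$. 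For $\ww_\delta$ the unrotated $\boldsymbol{q}$-term equals $-\Vert\nn\times\op{curl}\ww_\delta\Vert^2_{L^2(\partial\omega_i;\epsilon^{-1})}+o(1)$, so no $\theta$ with $\Re e^{i\theta}\ge c>0$ works uniformly. What is missing is a different functional setting, not a sharper estimate; for instance, a genuine transposition formulation whose trial space is $L^2(\omega_i)$ alone.
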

	\begin{proof}
For any $\boldsymbol{w}\in \mathbf{H}(\mathrm{curl};\partial\omega_i)$, let $\zz(\ww)$ be the solution to Problem \eqref{eq:pde-dual}. 
It follows from Lemma \ref{thm:pw-Regularity} that
\begin{align}\label{eq:b}
|b(\zz(\boldsymbol{w}))|\lesssim {\eta_i} H^{1/2}k^{-3/2} \Vert\boldsymbol{f}\Vert_{L^2(\partial\omega_i)}\|\boldsymbol{w}\|_{\mathbf{H}(\op{curl};\partial\omega_i)}.
\end{align}
A straightforward calculation leads to 
\begin{align*}
|c(\boldsymbol{v},\boldsymbol{w})|&\lesssim \Vert \boldsymbol{v}\Vert_{L^2(\omega_i)}\Vert \boldsymbol{w}\Vert_{L^2(\omega_i)}+
\eta_i H^{1/2}k^{-1}\Vert \nn\times\op{curl}\boldsymbol{v}\Vert_{L^2(\partial\omega_i;\epsilon^{-1})}\Vert \boldsymbol{w}\Vert_{L^2(\omega_i)} \\	
&\lesssim\left(1+\eta_i H^{1/2}k^{-1}\right)k^{-1/2}\Vert \boldsymbol{v}\Vert_{\mathbf{H}(\mathrm{curl};\partial\omega_i)}\Vert \boldsymbol{w}\Vert_{\mathbf{H}(\op{curl};\partial\omega_i)},
		\end{align*}
		which implies the boundedness of the sesquilinear form $c(\cdot,\cdot)$.  

Next, we prove the coercivity. For all $\boldsymbol{w}\in \mathbf{H}(\mathrm{curl};\partial\omega_i)$, since $\ww,\zz(\ww)\in \mathbf{H}(\mathrm{curl};\omega_i)$, there is some $\boldsymbol{q}\in L^2(\omega_i)^3$ such that $\op{curl}(ik\overline{\ww})=\op{curl}\overline{\zz(\ww)}+ \boldsymbol{q}$. Moreover, there is a constant $\theta\in[0,2\pi)$ such that 
		\begin{align*}
			\Re\left(ik^{-1}e^{i\theta}\int_{\partial\omega_i}\epsilon^{-2}(\nn\times\op{curl}\boldsymbol{w})\cdot (\nn\times\boldsymbol{q})\;\mathrm{d}s\right)\geq 0,
		\end{align*}
		and
		\begin{align*}
			\Re\left(e^{i\theta}\Vert\ww\Vert_{L^2(\omega_i)}^2\right)\gtrsim\Vert\ww\Vert_{L^2(\omega_i)}^2.
		\end{align*}
		Thus, we obtain
		\begin{align*}
			&\Re\left(-ik^{-1}e^{i\theta}\int_{\partial\omega_i}\epsilon^{-2}(\nn\times\op{curl}\boldsymbol{w})\cdot (\nn\times\op{curl}\overline{\zz(\ww)})\;\mathrm{d}s\right)\\
			&=\Re\left(-ik^{-1}e^{i\theta}\int_{\partial\omega_i}\epsilon^{-2}(\nn\times\op{curl}\boldsymbol{w})\cdot (\nn\times(\op{curl}(ik\overline{\ww})-\boldsymbol{q}))\;\mathrm{d}s\right)\\
			&= c\Vert\nn\times\op{curl}\boldsymbol{w}\Vert_{L^2(\partial\omega_i;\epsilon^{-1})}^2+\Re\left(ik^{-1}e^{i\theta}\int_{\partial\omega_i}\epsilon^{-2}(\nn\times\op{curl}\boldsymbol{w})\cdot (\nn\times\boldsymbol{q})\;\mathrm{d}s\right)\\
&\geq c\Vert\nn\times\op{curl}\boldsymbol{w}\Vert_{L^2(\partial\omega_i;\epsilon^{-1})}^2
\end{align*}
 with $c\coloneqq\Re(e^{i\theta})>0$. Hence, we get
\begin{align}\label{left hand}
\Re(e^{i\theta}c(\ww,\ww))
\geq c k^{-1}\Vert\ww\Vert_{\mathbf{H}(\mathrm{curl};\partial\omega_i)}^2,
\end{align}
which proves the coercivity. Hence, the Lax-Milgram Theorem \cite[Lemma 25.2]{ern2021finite} ensures the well-posedness of the nonstandard variational problem \eqref{eq:veryweak-weakform}, and this completes the proof.
	\end{proof}
	Finally, we are ready to present the main result. 
\begin{theorem}\label{lem:very-weak}
Given $\boldsymbol{f}\in L^2_{\ttt}(\partial\omega_i)^3$. Let $\boldsymbol{v}$ be the solution to \eqref{eq:very-weak}, then 
\begin{equation*}
\Vert\vv\Vert_{\mathbf{H}(\mathrm{curl};\partial\omega_i)}\lesssim
\eta_i H^{1/2}k^{-1/2} \Vert\boldsymbol{f}\Vert_{L^2(\partial\omega_i)}.
\end{equation*}
\end{theorem}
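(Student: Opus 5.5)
The plan is to combine the coercivity estimate \eqref{left hand} from the proof of Lemma \ref{lem:well-posedness-nonstandard} with the bound \eqref{eq:b} on the right-hand side functional. By Lemma \ref{lem:nonstandard-var}, the solution $\vv$ of \eqref{eq:very-weak} satisfies the nonstandard variational formulation \eqref{eq:veryweak-weakform}. By Lemma \ref{lem:well-posedness-nonstandard}, it is the unique solution of that formulation in $\mathbf{H}(\mathrm{curl};\partial\omega_i)$. We may therefore test \eqref{eq:veryweak-weakform} with $\ww:=\vv$, which gives
\begin{equation*}
c(\vv,\vv)=b(\zz(\vv)).
\end{equation*}

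For the lower bound, multiply by $e^{i\theta}$ with the same angle $\theta$ as in the coercivity argument, and take real parts. Then \eqref{left hand} yields
\begin{equation*}
c\,k^{-1}\Vert\vv\Vert_{\mathbf{H}(\mathrm{curl};\partial\omega_i)}^2\leq \Re\bigl(e^{i\theta}c(\vv,\vv)\bigr)\leq |b(\zz(\vv))|,
\end{equation*}
where $c=\Re(e^{i\theta})>0$ is independent of $k$, $H$ and $\epsilon$.

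For the upper bound, apply \eqref{eq:b} with $\ww=\vv$. This is simply Cauchy--Schwarz on $\partial\omega_i$ combined with the trace bound of Lemma \ref{thm:pw-Regularity} for $\nn\times\op{curl}\zz(\vv)$, using $\epsilon\geq1$ so that the extra factor $\epsilon^{-1}$ is harmless. It gives
\begin{equation*}
|b(\zz(\vv))|\lesssim \eta_i H^{1/2}k^{-3/2}\Vert\ff\Vert_{L^2(\partial\omega_i)}\Vert\vv\Vert_{\mathbf{H}(\mathrm{curl};\partial\omega_i)}.
\end{equation*}
Combining the two bounds, dividing by $\Vert\vv\Vert_{\mathbf{H}(\mathrm{curl};\partial\omega_i)}$ (the case $\vv=\zzz$ being trivial) and multiplying by $k$ gives $\Vert\vv\Vert_{\mathbf{H}(\mathrm{curl};\partial\omega_i)}\lesssim\eta_i H^{1/2}k^{-1/2}\Vert\ff\Vert_{L^2(\partial\omega_i)}$, which is the claim.

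The main obstacle is bookkeeping of the $k$-powers. The coercivity constant carries a factor $k^{-1}$, and $b$ carries $k^{-1}$ from its definition together with a further $k^{-1/2}$ from Lemma \ref{thm:pw-Regularity}. These must combine to exactly $k^{-1/2}$. A second point to check is that the angle $\theta$, and hence $c$, can be chosen uniformly, so that the hidden constant does not depend on $k$ or $H$; I would take this as given from the proof of Lemma \ref{lem:well-posedness-nonstandard}. Finally, I would check that $\vv$ genuinely lies in $\mathbf{H}(\mathrm{curl};\partial\omega_i)$, which is implicit in the hypothesis of Lemma \ref{lem:nonstandard-var}.
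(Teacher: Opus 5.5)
Your proposal is correct relative to the paper's preceding results and matches the paper's proof: test \eqref{eq:veryweak-weakform} with $\ww=\vv$, bound $\Re(e^{i\theta}c(\vv,\vv))$ from below by \eqref{left hand} and $|b(\zz(\vv))|$ from above by \eqref{eq:b}, then divide by $\Vert\vv\Vert_{\mathbf{H}(\mathrm{curl};\partial\omega_i)}$. The point you flag, a lower bound on $c=\Re(e^{i\theta})$ that is uniform in $\vv$, $k$ and $H$, is taken from the coercivity argument of Lemma \ref{lem:well-posedness-nonstandard} in exactly the same way by the paper (where $\theta$ is chosen depending on $\ww$), so both arguments rest entirely on that step.
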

\begin{proof}
By taking $\boldsymbol{w}:=\boldsymbol{v}$ in \eqref{left hand}, we obtain from \eqref{eq:b} and \eqref{eq:veryweak-weakform}, 
\begin{equation}\label{right hand}
\begin{aligned}
k^{-1}\Vert\vv\Vert^2_{\mathbf{H}(\mathrm{curl};\partial\omega_i)}&\lesssim \Re(e^{i\theta}c(\vv,\vv))=\Re\left(e^{i\theta}b(\boldsymbol{z}(\boldsymbol{v}))\right)\\
&\leq|b(\boldsymbol{z}(\boldsymbol{v}))|\\
&\lesssim \eta_i H^{1/2}k^{-3/2} \Vert\boldsymbol{f}\Vert_{L^2(\partial\omega_i)}\Vert\boldsymbol{v}\Vert_{\mathbf{H}(\op{curl};\partial\omega_i)},
		\end{aligned}
	\end{equation}
	which implies the desired assertion. 
\end{proof}

\begin{corollary}\label{corollary:very-weak}
	Let $\boldsymbol{f}\in L^2_{\ttt}(\partial\omega_i)^3$ and let $\vv$ be the solution to \eqref{eq:very-weak}. Then there hold
	\begin{align*}
\|{\vv_T}\|_{L^2(\partial\omega_i)}&\leq k^{-1}\|{\ff}\|_{L^2(\partial\omega_i)}\\
	\Vert\op{curl} \vv\Vert_{L^2(\omega_i;\epsilon^{-1/2})}	&\lesssim  \left(\eta_i H^{1/2}+k^{-1/2}\right)\|\boldsymbol{f}\|_{L^2(\partial\omega_i)}.
	\end{align*}
\end{corollary}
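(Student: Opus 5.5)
The plan is to test the strong form of \eqref{eq:very-weak} with $\vv$ itself, read off the tangential-trace bound from the imaginary part and the curl bound from the real part, and use Theorem~\ref{lem:very-weak} to control the volume $L^2$ term. Multiplying $\mathcal{L}_i\vv=\zzz$ by $\overline{\vv}$ and integrating by parts on $\omega_i$, with the boundary condition $\epsilon^{-1}\op{curl}\vv\times\nn=\ff+ik\vv_T$, gives the energy identity
\begin{align*}
\Vert\op{curl}\vv\Vert_{L^2(\omega_i;\epsilon^{-1/2})}^2-k^2\Vert\vv\Vert_{L^2(\omega_i)}^2-ik\Vert\vv_T\Vert_{L^2(\partial\omega_i)}^2=\langle\ff,\vv_T\rangle_{\partial\omega_i},
\end{align*}
which is the local weak form $a_i(\vv,\vv)=\langle\ff,\vv_T\rangle_{\partial\omega_i}$. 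I will assume $\vv$ has the $\mathbf{H}_{\op{imp}}(\op{curl};\omega_i)$ regularity needed for this identity. This is consistent with how the corollary is used: it is applied to the local problems \eqref{eq:loc-splitting_I} and \eqref{eq:Li 2}, which are posed in $\mathbf{H}_{\op{imp}}(\op{curl};\omega_i)$.

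\emph{Imaginary part.} Taking imaginary parts gives
\begin{align*}
k\Vert\vv_T\Vert_{L^2(\partial\omega_i)}^2=-\Im\langle\ff,\vv_T\rangle_{\partial\omega_i}\le\Vert\ff\Vert_{L^2(\partial\omega_i)}\Vert\vv_T\Vert_{L^2(\partial\omega_i)}.
\end{align*}
Dividing by $\Vert\vv_T\Vert_{L^2(\partial\omega_i)}$ yields $\Vert\vv_T\Vert_{L^2(\partial\omega_i)}\le k^{-1}\Vert\ff\Vert_{L^2(\partial\omega_i)}$, with constant exactly one, as stated.

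\emph{Real part.} Taking real parts and using the trace bound just obtained gives
\begin{align*}
\Vert\op{curl}\vv\Vert_{L^2(\omega_i;\epsilon^{-1/2})}^2\le k^2\Vert\vv\Vert_{L^2(\omega_i)}^2+k^{-1}\Vert\ff\Vert_{L^2(\partial\omega_i)}^2.
\end{align*}
For the volume term I use Theorem~\ref{lem:very-weak}. Since $\Vert\vv\Vert_{\mathbf{H}(\mathrm{curl};\partial\omega_i)}^2\ge k\Vert\vv\Vert_{L^2(\omega_i)}^2$, it gives $k\Vert\vv\Vert_{L^2(\omega_i)}^2\lesssim\eta_i^2Hk^{-1}\Vert\ff\Vert^2_{L^2(\partial\omega_i)}$, and hence $k^2\Vert\vv\Vert_{L^2(\omega_i)}^2\lesssim\eta_i^2H\Vert\ff\Vert^2_{L^2(\partial\omega_i)}$. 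Substituting and taking square roots, with $\sqrt{a+b}\le\sqrt a+\sqrt b$, gives $\Vert\op{curl}\vv\Vert_{L^2(\omega_i;\epsilon^{-1/2})}\lesssim(\eta_iH^{1/2}+k^{-1/2})\Vert\ff\Vert_{L^2(\partial\omega_i)}$.

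\emph{Main obstacle.} The delicate point is whether the solution of \eqref{eq:very-weak} is regular enough for the energy identity, since Theorem~\ref{lem:very-weak} places $\vv$ only in the weaker space $\mathbf{H}(\mathrm{curl};\partial\omega_i)$. My first approach is to use that for $\ff\in L^2_{\ttt}(\partial\omega_i)$ the standard local impedance problem $a_i(\vv,\cdot)=\langle\ff,(\cdot)_T\rangle_{\partial\omega_i}$ has a solution in $\mathbf{H}_{\op{imp}}(\op{curl};\omega_i)$. By Lemma~\ref{lem:nonstandard-var} this weak solution also satisfies \eqref{eq:veryweak-weakform}, and by the uniqueness in Lemma~\ref{lem:well-posedness-nonstandard} it coincides with the solution considered in Theorem~\ref{lem:very-weak}. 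With that identification, both the energy identity and the bound from Theorem~\ref{lem:very-weak} apply to the same function $\vv$.
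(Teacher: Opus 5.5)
Your proposal is correct and follows the paper's proof essentially step for step. You test with $\overline{\vv}$ to obtain the energy identity, read the trace bound off the imaginary part, and bound the curl through the real part combined with Theorem~\ref{lem:very-weak}. Your extra remark, identifying the $\mathbf{H}_{\op{imp}}(\op{curl};\omega_i)$ weak solution with the solution covered by \eqref{eq:veryweak-weakform}, makes explicit a regularity point that the paper leaves implicit.
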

\begin{proof}
Testing \eqref{eq:very-weak} with $\bar{\vv}$ and applying integration by parts, we arrive at
	\begin{align*}
		&\quad\int_{\omega_i}\mathcal{L}_i\vv\cdot\bar{\vv}\dx\\
		&=\int_{\omega_i}\op{curl}\left({\epsilon^{-1
		}}\op{curl}\boldsymbol{v}\right)\cdot\bar{\vv}\dx-k^2\int_{\omega_i}\boldsymbol{v}\cdot\bar{\vv}\dx\\
		&=\int_{\omega_i}\epsilon^{-1
		}\op{curl}\boldsymbol{v}\cdot(\op{curl}\bar{\vv})\dx-k^2\int_{\omega_i}\boldsymbol{v}\cdot\bar{\vv}\dx-\int_{\partial\omega_i}{\epsilon^{-1
		}}(\op{curl}\boldsymbol{v}\times\nn)\cdot\bar{\vv}\;\mathrm{d}s\\
		&=\int_{\omega_i}\epsilon^{-1
		}|\op{curl}\boldsymbol{v}|^2\dx-k^2\int_{\omega_i}|\boldsymbol{v}|^2\dx-ik\int_{\partial\omega_i}|\vv_T|^2\;\mathrm{d}s-\int_{\partial\omega_i}\ff\cdot\overline{\vv}\;\mathrm{d}s\\
	&=0.
	\end{align*}
Consequently, we obtain 
\begin{align}\label{eq:1/5}
\int_{\omega_i}\epsilon^{-1}|\op{curl}\boldsymbol{v}|^2\dx-k^2\int_{\omega_i}|\boldsymbol{v}|^2\dx-ik\int_{\partial\omega_i}|\vv_T|^2\;\mathrm{d}s=\int_{\partial\omega_i}\ff\cdot\overline{\vv}\;\mathrm{d}s.
\end{align}
Taking the imaginary part and applying Young's inequality, noticing that $\boldsymbol{f}\in L^2_{\ttt}(\gamma_i)^3$, we derive
\begin{align*}
\|{\vv_T}\|_{L^2(\partial\omega_i)}\leq k^{-1}\|{\ff}\|_{L^2(\partial\omega_i)}.
\end{align*}
Taking the real part of \eqref{eq:1/5}, we obtain
	\begin{align*}
		\int_{\omega_i}\epsilon^{-1
		}|\op{curl}\boldsymbol{v}|^2\dx&=k^2\int_{\omega_i}|\boldsymbol{v}|^2\dx
+\Re\left(\int_{\partial\omega_i}\ff\cdot\overline{\vv}\;\mathrm{d}s\right),
	\end{align*}
	 which combining with the Young's inequality and Theorem \ref{lem:very-weak}, reveals the second assertion.
\end{proof}
\subsection{Approximation properties of $\mathbf{V}_{\text{ms},\ell}$}

To analyze the convergence of Algorithm \ref{algorithm:wavelet}, we first introduce the global projection operator $\mathcal{P}_{\ell}$ of level $\ell$: $\mathbf{H}_{\op{imp}}(\op{curl};D) \to \mathbf{V}_{\text{ms},\ell}$. Since the multiscale ansatz space $\mathbf{V}_{\text{ms},\ell}$ is generated by the local multiscale space $\mathcal{L}_i^{-1}(\mathbf{V}_{i,\ell})$ by means of the partition of unity \eqref{eq:global-multiscale}, we only need to define local projection operator $\mathcal{P}_{i,\ell}$ of level $\ell$: $\mathbf{H}_{\op{imp}}(\op{curl};\omega_i)\to \mathcal{L}^{-1}_i(\mathbf{V}_{i,\ell})$, which is defined by
\begin{align}\label{eq:projectionEDGE}
	\mathcal{P}_{i,\ell}(\vv):=
	\sum\limits_{k=1}^{m_i}\sum\limits_{n=1}^{2}\sum\limits_{l,q=1}^{2^\ell}\langle\mathcal{B}\vv,\Psi^{i,k}_{l,q}\ttt_n^{i,k}\rangle_{\partial\omega_i^k}\mathcal{L}^{-1}_{i,n}(\Psi^{i,k}_{l,q}) .
\end{align}
Note that $\mathcal{B}\left(\mathcal{P}_{i,\ell}(\vv)\right)$ is the $L^2(\partial\omega_i)$-projection of $\mathcal{B}\vv$ onto $\mathbf{V}_{i,\ell}$. Note also that any $\vv\in \mathbf{H}_{\op{imp}}(\op{curl};D)$ can be expressed by
\begin{align*}
	\vv=\sum_{i=1}^N \left(\chi_i \vv|_{\omega_i}\right), 
\end{align*}
then the global projection $\mathcal{P}_{\ell}$ of level $\ell$: $ \mathbf{H}_{\op{imp}}(\op{curl};D)\to \mathbf{V}_{\text{ms},\ell}$ can be defined by means of the local projection \eqref{eq:projectionEDGE},
\begin{align}\label{eq:glo-proj}
	\mathcal{P}_{\ell}(\vv)\coloneqq\sum_{i=1}^N\chi_i (\mathcal{P}_{i,\ell}\vv|_{\omega_i}).
\end{align}
\begin{lemma}[Approximation properties of $\mathcal{P}_{\ell}$]\label{projection property}
Let Assumption \ref{ass:regularity} hold. Given $ \ell\in \mathbb{N}_{\geq 0}$ and $s\in (0,1/2]$. Let $\uu\in \mathbf{H}_{\op{imp}}(\mathrm{curl};D)$ be the solution to Problem \eqref{eq:bilinear-form} with $\uu_{I}$ defined in \eqref{eq:harmonic-glo}. Then there holds
\begin{align}
\Vert\uu_I-\mathcal{P}_{\ell}\uu_I\Vert_{L^2(D)}&\lesssim\eta_i H^{s+1/2}k^{-1}2^{-s\ell} 
\|\uu\|_{\mathbf{H}^{s+1/2}(\op{curl};D)}\label{eq:glo-l2}\\
\Vert\op{curl}(\uu_I-\mathcal{P}_{\ell}\uu_I)\Vert_{L^2(D;\epsilon^{-1/2})}
&\lesssim\eta_i H^{s-1/2}k^{-1}2^{-s\ell}\|\uu\|_{\mathbf{H}^{s+1/2}(\op{curl};D)}\label{eq:glo-energy}\\
\Vert\left(\uu_I-\mathcal{P}_{\ell}\uu_I\right)_T\Vert_{L^2(\partial D)}
&\lesssim H^sk^{-1}2^{-s\ell}\|\uu\|_{\mathbf{H}^{s+1/2}(\op{curl};D)}.\label{eq:glo-e_T}
\end{align}
\end{lemma}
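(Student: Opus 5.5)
The plan is to localize the error through the partition of unity, apply the local a priori bounds for Problem \eqref{eq:very-weak} to each local error, and then bound the boundary data by the wavelet estimate \eqref{prop:approx-wavelets}.

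\textbf{Step 1: localization.} By definition \eqref{eq:harmonic-glo}, $\uu_I=\sum_i\chi_i\uu^i_I$, with $\uu^i_I$ solving \eqref{eq:loc-splitting_I}. Since $\mathcal{P}_{i,\ell}$ is linear and reproduces the boundary data in the $L^2$-projected sense, the natural local object is $\mathcal{P}_{i,\ell}\uu^i_I$ rather than $\mathcal{P}_{i,\ell}(\uu_I|_{\omega_i})$. I would therefore read $\mathcal{P}_\ell\uu_I$ as $\sum_i\chi_i\mathcal{P}_{i,\ell}\uu^i_I$, which is how it enters the later estimates. The local error is $\mathbf{e}_i:=\uu^i_I-\mathcal{P}_{i,\ell}\uu^i_I$. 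It satisfies $\mathcal{L}_i\mathbf{e}_i=\zzz$ in $\omega_i$ and $\mathcal{B}\mathbf{e}_i=\ff_i:=(I-\Pi_{i,\ell})\mathcal{B}\uu^i_I$ on $\partial\omega_i$, where $\Pi_{i,\ell}$ is the $L^2(\partial\omega_i)$-projection onto $\mathbf{V}_{i,\ell}$. On $\gamma_i$ the data $\mathcal{B}\uu^i_I$ equals $\mathcal{B}\uu$, and on $\Gamma_i$ it is zero.

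\textbf{Step 2: local estimates.} Theorem \ref{lem:very-weak} gives $k^{1/2}\|\mathbf{e}_i\|_{L^2(\omega_i)}\lesssim\eta_iH^{1/2}k^{-1/2}\|\ff_i\|_{L^2(\partial\omega_i)}$, that is, $\|\mathbf{e}_i\|_{L^2}\lesssim\eta_iH^{1/2}k^{-1}\|\ff_i\|$. Corollary \ref{corollary:very-weak} gives $\|(\mathbf{e}_i)_T\|\le k^{-1}\|\ff_i\|$ and a bound on $\|\op{curl}\mathbf{e}_i\|$. For the curl, however, I would rather use $\op{curl}(\chi_i\mathbf{e}_i)=\chi_i\op{curl}\mathbf{e}_i+\nabla\chi_i\times\mathbf{e}_i$. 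The term $H^{-1}\|\mathbf{e}_i\|$ gives the factor $\eta_iH^{-1/2}k^{-1}\|\ff_i\|$, which matches \eqref{eq:glo-energy}. The $\chi_i\op{curl}\mathbf{e}_i$ term needs the curl bound to scale the same way; Corollary \ref{corollary:very-weak} only gives $(\eta_iH^{1/2}+k^{-1/2})\|\ff_i\|$, so this term has to be absorbed using $Hk\lesssim1$ from Assumption \ref{ass:resolution}. This scaling is the first place I expect friction. Summing over $i$ with finite overlap $\Cov$ yields $\ell^2$-sums of $\|\ff_i\|$. For the tangential trace on $\partial D$, \eqref{prop:vanish-grad} and $\|\chi_i\|\le1$ give $\|(\chi_i\mathbf{e}_i)_T\|_{L^2(\partial D)}\le k^{-1}\|\ff_i\|$.

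\textbf{Step 3: data approximation, the main obstacle.} The faces are images of $I^2$ of diameter $\simeq H$, so scaling \eqref{prop:approx-wavelets} face by face gives $\|\ff_i\|_{L^2(\partial\omega_i)}\lesssim H^{s}2^{-s\ell}|\mathcal{B}\uu|_{H^s(\gamma_i)}$. Because $\mathcal{B}\uu^i_I$ vanishes on $\Gamma_i$, it is not globally $H^s$ across the edges where $\gamma_i$ meets $\Gamma_i$. This is exactly why $s\le1/2$ is imposed: for $s<1/2$ the zero extension preserves $H^s$ on each face, and the edge-to-edge jumps are harmless since the wavelets act face-wise. It then remains to bound $\sum_i|\mathcal{B}\uu|^2_{H^s(\gamma_i)}$ by the trace theorem applied to $\epsilon^{-1}\op{curl}\uu\times\nn-ik\uu_T$, with $\uu\in\mathbf{H}^{s+1/2}(\op{curl};D)$ from Assumption \ref{ass:regularity}. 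With the scaled trace inequality on faces of size $H$ and $\epsilon\in W^{1,\infty}$, this gives $\lesssim\|\op{curl}\uu\|_{H^{s+1/2}}+k\|\uu\|_{H^{s+1/2}}=\|\uu\|_{\mathbf{H}^{s+1/2}(\op{curl};D)}$, possibly with powers of $H$ that I expect to be controlled by $H\lesssim1$.

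Combining Steps 2 and 3 yields \eqref{eq:glo-l2}, \eqref{eq:glo-energy} and \eqref{eq:glo-e_T}, with $\eta_i$ interpreted as $\max_i\eta_i$. I expect the hardest parts to be making the $H^s$ face-trace estimate rigorous at the interior/boundary interface edges, and matching the $H^{s-1/2}$ power in the curl estimate.
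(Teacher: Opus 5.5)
Your proposal is correct and follows essentially the same route as the paper, which also writes $\boldsymbol{e}=\sum_i\chi_i\boldsymbol{e}^i$ with $\boldsymbol{e}^i=\uu^i_I-\mathcal{P}_{i,\ell}\uu^i_I$ (your reading of $\mathcal{P}_\ell\uu_I$), bounds $\boldsymbol{e}^i$ by Theorem \ref{lem:very-weak} and Corollary \ref{corollary:very-weak}, controls the data by the scaled Haar estimate plus a trace inequality, and sums with finite overlap; the absorption of the $\chi_i\op{curl}\boldsymbol{e}^i$ term via $Hk\lesssim 1$ is what the paper's ``analogously'' silently requires. One small correction: the restriction $s\le 1/2$ comes from Assumption \ref{ass:regularity} (you need $s+1/2\in(1/2,1]$), not from the zero extension, and since the Haar projection acts face by face your argument also covers $s=1/2$; using the scale-invariant seminorm form of the trace inequality avoids any negative powers of $H$.
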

\begin{proof}
	Let $\boldsymbol{e}:=\uu_I-\mathcal{P}_{\ell}\uu_I$ be the global error, then the property of the partition of unity of $\{\chi_i\}_{i=1}^{N}$, together with \eqref{eq:harmonic-glo}, leads to
	\[
	\boldsymbol{e}=\sum\limits_{i=1}^{N}\chi_i\boldsymbol{e}^i \qquad\text{ with }
	\qquad\boldsymbol{e}^i:=\uu^{i}_I-\mathcal{P}_{i,\ell}\uu^{i}_I.
	\]
	Our proof is composed of three steps.
	
	\noindent Step 1. Estimate the local error $\boldsymbol{e}^i$ over each local inner boundary $\gamma_i$.  Using the projection error for Haar bases \eqref{prop:approx-wavelets} and a scaling argument, we obtain	
		\begin{align}
			\|\mathcal{B}(\mathcal{P}_{i,\ell} \uu^i_I-I)\|_{L^2(\partial\omega_i)}&=\left\|\mathcal{B}\uu^{i}_I-\sum\limits_{k=1}^{m_i}\sum\limits_{n=1}^{2}\sum\limits_{l,q=1}^{2^\ell}\langle\mathcal{B}\uu_I^i,\Psi^{i,k}_{l,q}\ttt_n^{i,k}\rangle_{\partial\omega_i^k}\psi_{l,m}^{i,k}\ttt_n^{i,k}\right\|_{L^2(\partial\omega_i)}\nonumber\\
			&\lesssim 2^{-s\ell} H^s |\mathcal{B}\uu^{i}_I|_{\mathbf{H}^{s}(\partial\omega_i)}.\label{eq:1}
		\end{align}

	\noindent Step 2. Estimate the local error $\boldsymbol{e}^i$ over each coarse neighborhood $\omega_i$ mainly by the transposition method established in Section \ref{sec:appendix}.
	
	Note that each local error $\boldsymbol{e}^i$ satisfies the following equation,
	\begin{equation}\label{eq:pde-very 1}
		\left\{\begin{aligned}
			&\mathcal{L}_i \boldsymbol{e}^i=\zzz && \text{ in } \omega_i,\\
			&\mathcal{B}\boldsymbol{e}^i=\mathcal{B}(\uu^i_I-\mathcal{P}_{i,\ell} \uu^i_I) &&\text{ on }\partial\omega_i.
\end{aligned}\right.
	\end{equation}
	Theorem \ref{lem:very-weak} and Corollary \ref{corollary:very-weak} indicate that the local error $\boldsymbol{e}^i$ within $\omega_i$ and its tangential trace can be bounded by its boundary data,
	\begin{align*}
		\|\boldsymbol{e}^i\|_{L^2(\omega_i)}&\lesssim
\eta_i H^{1/2}k^{-1} \Vert\mathcal{B}\uu^i_I-\mathcal{B}(P_{i,\ell} \uu^i_I)\Vert_{L^2(\partial\omega_i)},\\
\|\op{curl}\boldsymbol{e}^i \|_{L^2(\omega_i;\epsilon^{-1/2})}&\lesssim\left(\eta_i H^{1/2}+k^{-1/2}\right)\Vert\mathcal{B}\uu^i-\mathcal{B}(P_{i,\ell} \uu^i)\Vert_{L^2(\partial\omega_i)},\\
		\|\boldsymbol{e}^i_T\|_{L^2(\partial\omega_i)}&\leq
		k^{-1} \Vert\mathcal{B}(\uu^i_I-\mathcal{P}_{i,\ell} \uu^i_I)\Vert_{L^2(\partial\omega_i)}.
	\end{align*}
	Then together with \eqref{eq:1}, these lead to
	\begin{align}
		\|\boldsymbol{e}^i\|_{L^2(\omega_i)}&\lesssim\eta_i H^{s+1/2}k^{-1}2^{-s\ell}  \|\mathcal{B}\uu^{i}\|_{\mathbf{H}^{s}(\partial\omega_i)},\label{eq:111111}\\
		\|\op{curl}\boldsymbol{e}^i \|_{L^2(\omega_i;\epsilon^{-1/2})}&\lesssim \left(\eta_i H^{1/2}+k^{-1/2}\right) H^{s}2^{-s\ell} \|\mathcal{B}\uu^{i}\|_{\mathbf{H}^{s}(\partial\omega_i)},\label{eq:333333}\\
		\|\boldsymbol{e}^i_T\|_{L^2(\partial\omega_i)}&\lesssim H^sk^{-1}2^{-s\ell}  \|\mathcal{B}\uu^{i}\|_{\mathbf{H}^{s}(\partial\omega_i)}.\label{eq:111111_2}
	\end{align}
	\noindent Step 3. Estimate the global error by summation of local error. Using the finite overlapping condition \eqref{eq:overlap} and the properties of the partition of unity functions \eqref{eq:pou}, we obtain
	\begin{align*}
	\|\boldsymbol{e}\|_{L^2(D)}&\lesssim\left(\sum_{i=1}^N\|\boldsymbol{e}^i\|_{L^2(\omega_i)}^2\right)^{1/2}.
	\end{align*}
Combining with \eqref{eq:111111}, trace inequality ($\|\mathcal{B}\uu^{i}\|_{\mathbf{H}^{s}(\partial\omega_i)}\lesssim \|\uu\|_{\mathbf{H}^{s+1/2}(\op{curl};\omega_i)}$), and the finite overlapping condition \eqref{eq:overlap} leads to
		\begin{align*}
		\|\boldsymbol{e}\|_{L^2(D)}
&\lesssim \eta_i H^{s+1/2}k^{-1}2^{-s\ell} \left(\sum_{i=1}^N\|\mathcal{B}\uu^{i}\|_{\mathbf{H}^{s}(\partial\omega_i)}^2\right)^{1/2}\\
		&\lesssim\eta_i H^{s+1/2}k^{-1}2^{-s\ell} \|\uu\|_{\mathbf{H}^{s+1/2}(\op{curl};D)}.
	\end{align*}
	This proves \eqref{eq:glo-l2}. 

Using again the finite overlapping condition \eqref{eq:overlap} and the properties of the partition of unity function \eqref{eq:pou}, we obtain
	\begin{align}\label{eq:33333}
		\|\op{curl}\boldsymbol{e}\|_{L^2(D;\epsilon^{-1/2})}^2&\lesssim \sum_{i=1}^NH^{-2}\underline{\epsilon}_i^{-1}\|\boldsymbol{e}^i\|_{L^2(\omega_i)}^2
		+\|\op{curl} \boldsymbol{e}^i\|_{L^2(\omega_i;\epsilon^{-1/2})}^2.
	\end{align}
Analogously, we can prove \eqref{eq:glo-energy} by equations \eqref{eq:111111} \eqref{eq:333333}, \eqref{eq:33333} and the fact that $\underline{\epsilon}_i\geq 1$.

	Equation \eqref{eq:glo-e_T} can be derived in a similar manner using \eqref{eq:111111_2},
	\begin{align*}
		\|\boldsymbol{e}_T\|_{L^2(\partial D)}&\lesssim  \left(\sum_{i=1}^N\|\boldsymbol{e}^i_T\|_{L^2(\Gamma_i)}^2\right)^{1/2}\\
		&\lesssim H^sk^{-1}2^{-s\ell}\|\uu\|_{\mathbf{H}^{s+1/2}(\op{curl};D)},
	\end{align*}
	and then we complete the proof.
	
\end{proof}

\begin{remark}
	Following equation \eqref{eq:harmonic-glo}, we have $\uu_I=\sum_{i=1}^{N}\left(\chi_i \uu^i_I\right)$. Since $
	\nabla\cdot\left(\chi_i \uu^i_I\right)=\nabla\chi_i\cdot\uu^i_I+\chi_i\nabla\cdot\uu^i_I$,  the finite overlapping condition \eqref{eq:overlap} and the properties of the partition of unity functions \eqref{eq:pou} indicate that the approximated solution is not divergence free and the error in $L^2$-norm induced by partition of unity function is bounded by
	\begin{align*}
		\Vert\nabla\cdot(\uu_I-\mathcal{P}_{\ell}\uu_I)\Vert_{L^2(D)}&\lesssim\left(\sum_{i=1}^N\|\nabla\chi_i\cdot\boldsymbol{e}^i\|_{L^2(\omega_i)}^2\right)^{1/2}\\
		&\lesssim H^{-1}\|\boldsymbol{e}\|_{L^2(D)}\lesssim \eta_i H^{s-1/2}k^{-1}2^{-s\ell} \|\uu\|_{\mathbf{H}^{s+1/2}(\op{curl};D)}.
	\end{align*}
	
\end{remark}
The approximation property of the ansatz space
$\mathbf{V}_{\text{ms},\ell}$ can be presented in the following result.
\begin{corollary}[Approximation properties of the multiscale space $\mathbf{V}_{\text{ms},\ell}$] \label{thm:last}
	Let Assumptions \ref{ass:regularity} and \ref{ass:resolution} hold and
	assume that $\boldsymbol{g}\in L^2_{\ttt}(\partial D)^3$. Let $\ell\in
	\mathbb{N}_{+}$, $s\in (0,1/2]$ and $\uu\in \mathbf{H}_{\op{imp}}(\mathrm{curl};D)$ be the solution to Problem
	\eqref{eq:model}. Then there holds
	\begin{equation}\label{eq:waveletErrconv1}
		\begin{aligned}
			\inf\limits_{\vv\in \mathbf{V}_{\rm{ms},\ell}}\|\uu_I-\vv\|_{\mathbf{H}_{\op{imp}}(\mathrm{curl};D)}
			&\lesssim \overline{\eta} H^{s-1/2}k^{-1}2^{-s\ell}\|\uu\|_{\mathbf{H}^{s+1/2}(\op{curl};D)}.
		\end{aligned}
	\end{equation}
Here, $\overline{\eta}\coloneqq \max_{i=1,\cdots,N}\left\{\eta_i\right\}$. 
\end{corollary}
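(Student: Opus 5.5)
The infimum will be bounded by choosing the specific competitor $\vv:=\mathcal{P}_{\ell}\uu_I\in\mathbf{V}_{\text{ms},\ell}$. This reduces the claim to estimating the three pieces of the $k$-weighted graph norm of $\boldsymbol{e}:=\uu_I-\mathcal{P}_{\ell}\uu_I$:
\begin{equation*}
\|\boldsymbol{e}\|_{\mathbf{H}_{\op{imp}}(\mathrm{curl};D)}^2=\|\op{curl}\boldsymbol{e}\|_{L^2(D;\epsilon^{-1/2})}^2+k^2\|\boldsymbol{e}\|_{L^2(D)}^2+k\|\boldsymbol{e}_T\|_{L^2(\partial D)}^2 .
\end{equation*}
Each piece is controlled by Lemma \ref{projection property}. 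Since the constants there carry a local factor $\eta_i$, I first replace every $\eta_i$ by $\overline{\eta}=\max_i\eta_i$ uniformly in the summation of Step 3 of that proof. This is legitimate because the local bounds \eqref{eq:111111}--\eqref{eq:111111_2} are summed over $i$ with finite overlap \eqref{eq:overlap}. Assumption \ref{ass:resolution} is needed so that Theorem \ref{lem:very-weak}, Corollary \ref{corollary:very-weak}, and hence Lemma \ref{projection property}, are available.

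Next I compare the three contributions with the target rate $\overline{\eta}H^{s-1/2}k^{-1}2^{-s\ell}\|\uu\|_{\mathbf{H}^{s+1/2}(\op{curl};D)}$.

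\emph{Curl term.} By \eqref{eq:glo-energy}, this term is exactly of the target order.

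\emph{$L^2$ term.} By \eqref{eq:glo-l2},
\begin{equation*}
k\|\boldsymbol{e}\|_{L^2(D)}\lesssim \overline{\eta}H^{s+1/2}2^{-s\ell}\|\uu\|_{\mathbf{H}^{s+1/2}(\op{curl};D)} = (Hk)\,\overline{\eta}H^{s-1/2}k^{-1}2^{-s\ell}\|\uu\|_{\mathbf{H}^{s+1/2}(\op{curl};D)}.
\end{equation*}
This is the delicate factor: it carries $Hk$. Assumption \ref{ass:resolution}, together with $\Cpoin{\omega_i}$ being bounded below and $\bar\epsilon_i\ge1$, gives $Hk\lesssim1$, so this term is also of target order.

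\emph{Boundary term.} By \eqref{eq:glo-e_T},
\begin{equation*}
k^{1/2}\|\boldsymbol{e}_T\|_{L^2(\partial D)}\lesssim H^{s}k^{-1/2}2^{-s\ell}\|\uu\|_{\mathbf{H}^{s+1/2}(\op{curl};D)} = (Hk)^{1/2}H^{s-1/2}k^{-1}2^{-s\ell}\|\uu\|_{\mathbf{H}^{s+1/2}(\op{curl};D)}.
\end{equation*}
Again $Hk\lesssim 1$, and $\eta_i\ge\Lambda_i\ge1$ gives $\overline{\eta}\ge 1$, so this term is absorbed as well.

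Summing the three contributions gives \eqref{eq:waveletErrconv1}.

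The main obstacle is bookkeeping, not a new estimate: the implied constants in Lemma \ref{projection property} must not hide $k$- or $\epsilon$-dependence beyond $\overline{\eta}$. For that I would recheck \eqref{eq:33333}. In particular, the $H^{-2}\underline{\epsilon}_i^{-1}\|\boldsymbol{e}^i\|^2$ term from $\nabla\chi_i$ yields $\eta_iH^{s-1/2}k^{-1}$. The $k^{-1/2}H^s$ part of \eqref{eq:333333} is dominated by the same rate precisely when $H^{1/2}k^{1/2}\lesssim\eta_i$, which again follows from $Hk\lesssim1$. If the resolution assumption only gave $Hk$ small with a constant depending on $\Cpoin{\omega_i}$, I would keep that dependence as part of the suppressed generic constant.
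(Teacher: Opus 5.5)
Your proposal is correct and follows the route the paper intends. The paper states the corollary without a separate proof, as a direct consequence of Lemma \ref{projection property}: one inserts the competitor $\mathcal{P}_{\ell}\uu_I$ and sums \eqref{eq:glo-l2}--\eqref{eq:glo-e_T}. Strictly, the competitor is $\sum_{i}\chi_i\mathcal{P}_{i,\ell}\uu^i_I$, since that is the function the lemma's proof actually estimates, and it also lies in $\mathbf{V}_{\text{ms},\ell}$. Your absorption of the $k\|\boldsymbol{e}\|_{L^2(D)}$ and $k^{1/2}\|\boldsymbol{e}_T\|_{L^2(\partial D)}$ terms, using $Hk\lesssim 1$ from Assumption \ref{ass:resolution} and $\overline{\eta}\ge 1$, is exactly the bookkeeping the paper leaves implicit.
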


\subsection{Error bound}
We present the error bound for Algorithm \ref{algorithm:wavelet}.
\begin{theorem}\label{wellpose}
Let $0\leq\ell\in \mathbb{N}$, $s\in (1/2,1]$, $\uu\in \mathbf{H}_{\op{imp}}(\mathrm{curl};D)$ the solution to Problem \eqref{eq:bilinear-form} and $\uu_{\text{ms},\ell}$ defined in \eqref{eq:ms-soln}. Let \eqref{ass:dual} hold. Moreover, we assume the following three conditions are satisfied,
\begin{equation}\label{mathbf c}
\begin{aligned}
&\max_{i=1,\cdots,N}\{\eta_i\}\ll k\\
&H= k^{-1-\alpha} \text{ for some }\alpha>0\\
&\ell\geq\max\left\{0,(s-1/2)^{-1}\left(\theta+3/2-(1+\alpha)(s-1/2)\right)\log_2 k\right\}.
\end{aligned}
\end{equation}
Then there holds 
\begin{align}\label{eq:quasi-opt}
	\Vert \uu-\uu_{{\rm ms},\ell}\Vert_{\mathbf{H}_{\op{imp}}(\mathrm{curl};D)}\lesssim
	\inf_{\ww_{{\rm ms},\ell}\in \mathbf{V}_{\text{ms},\ell}}\Vert \uu_I-\ww_{{\rm ms},\ell}\Vert_{\mathbf{H}_{\op{imp}}(\mathrm{curl};D)}.
\end{align}	
\end{theorem}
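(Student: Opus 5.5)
The proof is a Schatz-type duality argument applied to the error of the Galerkin scheme \eqref{eqn:weakform_h-romb}. Set $\boldsymbol{e}:=\uu-\uu_{\mathrm{ms},\ell}=\uu_I-\uu^I_{\mathrm{ms},\ell}$. The first step is Galerkin orthogonality. Since $\uu=\uu_B+\uu_I$ solves \eqref{eq:bilinear-form} and $\mathbf{V}_{\mathrm{ms},\ell}\subset\mathbf{H}_{\op{imp}}(\op{curl};D)$, we have $a(\uu_I,\vv)=\langle\boldsymbol{g},\vv_T\rangle_{\partial D}-a(\uu_B,\vv)$ for every $\vv\in\mathbf{V}_{\mathrm{ms},\ell}$. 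Comparing with \eqref{eqn:weakform_h-romb} gives $a(\boldsymbol{e},\vv)=0$ for all $\vv\in\mathbf{V}_{\mathrm{ms},\ell}$.

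Next, for arbitrary $\ww\in\mathbf{V}_{\mathrm{ms},\ell}$, apply the G\aa rding inequality \eqref{a property 2} to $\boldsymbol{e}$, then orthogonality and the continuity \eqref{a property 1}:
\begin{align*}
\|\boldsymbol{e}\|^2_{\mathbf{H}_{\op{imp}}(\op{curl};D)}\le |a(\boldsymbol{e},\uu_I-\ww)|+2k^2\|\boldsymbol{e}\|^2_{L^2(D)}\le \|\boldsymbol{e}\|_{\mathbf{H}_{\op{imp}}}\|\uu_I-\ww\|_{\mathbf{H}_{\op{imp}}}+2k^2\|\boldsymbol{e}\|^2_{L^2(D)}.
\end{align*}
It therefore suffices to show $k\|\boldsymbol{e}\|_{L^2(D)}\le \tfrac12\|\boldsymbol{e}\|_{\mathbf{H}_{\op{imp}}}$, with the $\tfrac12$ replaceable by any small absolute constant.

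To bound $\|\boldsymbol{e}\|_{L^2}$, use the dual problem \eqref{dual} with $\vv:=\boldsymbol{e}$ and solution $\zz$. Then, for any $\zz_{\mathrm{ms}}\in\mathbf{V}_{\mathrm{ms},\ell}$,
\[
\|\boldsymbol{e}\|^2_{L^2}=a(\boldsymbol{e},\zz)=a(\boldsymbol{e},\zz-\zz_{\mathrm{ms}})\le\|\boldsymbol{e}\|_{\mathbf{H}_{\op{imp}}}\inf_{\zz_{\mathrm{ms}}}\|\zz-\zz_{\mathrm{ms}}\|_{\mathbf{H}_{\op{imp}}}.
\]
The key ingredient is an adjoint approximation estimate: $\inf_{\zz_{\mathrm{ms}}}\|\zz-\zz_{\mathrm{ms}}\|_{\mathbf{H}_{\op{imp}}}\lesssim \overline{\eta}H^{s-1/2}k^{-1}2^{-(s-1/2)\ell}\|\zz\|_{\mathbf{H}^{s}(\op{curl};D)}$. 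This is Lemma \ref{projection property} and Corollary \ref{thm:last} with $s$ replaced by $s-1/2\in(0,1/2]$, applied to $\zz$ (which solves a homogeneous interior problem of the same type) in place of $\uu$. I use the full projection $\mathcal{P}_\ell\zz$. The boundary-particular part causes no trouble, since for the dual problem the data enter only through the volume source; this is handled by decomposing locally exactly as in \eqref{eq:loc-splitting}, with the local boundary data being $\mathcal{B}\zz$ on all of $\partial\omega_i$. The volume source $\boldsymbol{e}$ in the dual problem breaks the homogeneity assumed in Theorem \ref{lem:very-weak}, so this is the step I expect to be the main obstacle. I would handle it by adding a local volume-source corrector on each $\omega_i$, i.e.\ the solution of $a_i(\cdot,\vv)=(\boldsymbol{e},\vv)_{\omega_i}$. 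Under Assumption \ref{ass:resolution} this corrector is small, of order $H^2$ relative to $\|\boldsymbol{e}\|_{L^2}$, by the argument of Lemma \ref{thm:pw-Regularity}.

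With \eqref{ass:dual} this yields
\[
k\|\boldsymbol{e}\|_{L^2}\lesssim \overline{\eta}\,H^{s-1/2}\,2^{-(s-1/2)\ell}\,\Const{stab}k^{\theta}\,\|\boldsymbol{e}\|_{\mathbf{H}_{\op{imp}}}.
\]
The final step is to check that the conditions \eqref{mathbf c} make the prefactor small. With $H=k^{-1-\alpha}$ we have $H^{s-1/2}=k^{-(1+\alpha)(s-1/2)}$, and the lower bound on $\ell$ gives $2^{-(s-1/2)\ell}\le k^{-(\theta+3/2)+(1+\alpha)(s-1/2)}$. The prefactor is therefore at most $\overline{\eta}\,k^{-3/2}$ up to constants. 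Since $\overline{\eta}\ll k$, this is $\ll k^{-1/2}\le k_0^{-1/2}$, so it can be made smaller than the absorption threshold $\tfrac12$. Absorbing $2k^2\|\boldsymbol{e}\|^2_{L^2}$ into the left-hand side and dividing by $\|\boldsymbol{e}\|_{\mathbf{H}_{\op{imp}}}$ gives \eqref{eq:quasi-opt}, after taking the infimum over $\ww$.
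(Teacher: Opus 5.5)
Your Galerkin orthogonality and G$\mathring{a}$rding steps match the end of the paper's proof. The duality step, however, has a genuine gap. You claim the local volume-source corrector is $O(H^2)$ ``by the argument of Lemma~\ref{thm:pw-Regularity}''. That argument rests on the Maxwell estimate $\|\zz\|_{L^2(\omega_i)}^2\le\Cpoin{\omega_i}H^2\|\op{curl}\zz\|_{L^2(\omega_i)}^2$, which holds only for divergence-free fields, and your source $\boldsymbol{e}$ is not divergence-free. Since $\mathbf{V}_{\mathrm{ms},\ell}$ contains no gradients, Galerkin orthogonality imposes no (discrete) divergence constraint, unlike edge FEM where $\nabla S_h\subset\mathbf{V}_h$. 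The partition of unity also destroys solenoidality (cf.\ the Remark following Lemma~\ref{projection property}).

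Concretely, write $\boldsymbol{e}=\boldsymbol{e}^0+\nabla p$ with $\nabla\cdot\boldsymbol{e}^0=0$ and $p\in H^1_0(D)$. The dual solution is then $\zz=\zz^0-k^{-2}\nabla p$, where $\zz^0$ solves \eqref{dual} with data $\boldsymbol{e}^0$, and $a(\boldsymbol{e},-k^{-2}\nabla p)=\|\nabla p\|^2_{L^2(D)}$. Likewise, each of your local correctors contains the piece $-k^{-2}\nabla p_i$, with $p_i\in H^1_0(\omega_i)$ the local potential, and this piece is not $O(H^2)\|\boldsymbol{e}\|_{L^2(\omega_i)}$. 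Nothing in $\mathbf{V}_{\mathrm{ms},\ell}$ approximates this gradient piece. It simply reproduces $\|\nabla p\|^2$ in $\|\boldsymbol{e}\|^2=\|\boldsymbol{e}^0\|^2+\|\nabla p\|^2$. Your duality argument therefore effectively bounds only $\|\boldsymbol{e}^0\|_{L^2(D)}$, and the part $2k^2\|\nabla p\|^2_{L^2(D)}$ of the G$\mathring{a}$rding term is never absorbed.

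This is exactly what the paper's proof is organized around. It dualizes only against $\boldsymbol{e}^0$, so $\zz$ is divergence-free and the bubble bounds \eqref{eq:local-bubble-xxxxx} are legitimate. It splits $\zz=\zz_b+\zz_r+\zz_I$, with $\zz_b$ merely bounded by $\overline{\eta}H\|\boldsymbol{e}^0\|_{L^2(D)}$ rather than approximated. It treats $\boldsymbol{e}^0=\zzz$ separately: there $\boldsymbol{e}_T=\zzz$, and orthogonality gives $-k^2\|\nabla p\|^2=a(\boldsymbol{e},\boldsymbol{e})=0$. Finally, it controls the gradient part through the assumption $\|\nabla p\|\le\beta\|\boldsymbol{e}\|$ with $\beta<1$. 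Via \eqref{eq:pressure} and \eqref{eq:div-grad-est}, this turns the bound on $\|\boldsymbol{e}^0\|$ into $2k^2\|\boldsymbol{e}\|^2\le\frac12\|\boldsymbol{e}\|^2_{\mathbf{H}_{\op{imp}}(\op{curl};D)}$. Some such control of the gradient component is the ingredient your proposal lacks.

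Separately, there is a bookkeeping slip. Corollary~\ref{thm:last} with $s$ replaced by $s-1/2$ gives $H^{s-1}$, not $H^{s-1/2}$; the extra $H^{-1/2}$ comes from the $\nabla\chi_i$ term in \eqref{eq:33333}. With the correct power, your prefactor under \eqref{mathbf c} is $\overline{\eta}k^{\alpha/2-1}$ rather than $\overline{\eta}k^{-3/2}$, and $\overline{\eta}\ll k$ does not make it small. The $\ell$-condition in \eqref{mathbf c} is calibrated to the paper's bound $k^{\theta+1/2}2^{-(s-1/2)\ell}H^{s-1/2}$ for the $\zz_I$ part, which it reduces to $k^{-1}$.
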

	\begin{proof}		
Let $\mathbf{e}_{\text{ms},\ell}:= \uu_I-\uu_{{\rm ms},\ell}^I$ be the numerical error for \eqref{eqn:weakform_h-romb}. Then by definition, $\mathcal{B}\mathbf{e}_{\text{ms},\ell}=0$ on $\partial D$. By Helmholtz decomposition, there is $\mathbf{e}^0_{\text{ms},\ell}\in \mathbf{H}_{\op{imp}}(\mathrm{curl};D)\cap \mathbf{H}(\op{div }0;D)$ and a scalar $p\in H^1_0(D)$ such that $\mathbf{e}_{\text{ms},\ell}=\mathbf{e}^0_{\text{ms},\ell}+\nabla p$. Here, $p\in H^1_0(D)$ satisfies
\begin{align*}
(\nabla p, \nabla \xi)_{D}=(\mathbf{e}_{\text{ms},\ell}, \nabla \xi)_{D} \;\forall\xi\in H^1_0(D).
\end{align*}  
This implies  
\begin{align}\label{eq:div-grad-est}
\|{\mathbf{e}_{\text{ms},\ell}}\|_{L^2(D)}^2
=\|{\mathbf{e}^0_{\text{ms},\ell}}\|_{L^2(D)}^2
+\|\nabla p\|_{L^2(D)}^2.
\end{align}
Note that we only concern the case when $\mathbf{e}^0_{\text{ms},\ell}\neq \zzz$. Otherwise, assume that $\mathbf{e}_{\text{ms},\ell}=\nabla p$ in $D$ that is curl-free. Together with the fact that $\mathcal{B}\mathbf{e}_{\text{ms},\ell}=\zzz$ on $\partial D$, we derive $(\mathbf{e}_{\text{ms},\ell})_T=\zzz$ on $\partial D$. Hence, a combination of the Galerkin orthogonality, the fact that $\mathbf{V}_{\text{ms},\ell}$ is $\mathbf{H}(\op{curl};D)$-conforming, and \eqref{eq:bilinear-form} leads to
\begin{align*}
-k^2\|\nabla p\|_{L^2(D)}^2=a(\mathbf{e}_{\text{ms},\ell},\mathbf{e}_{\text{ms},\ell})
&=
a(\uu-\uu_{\text{ms},\ell},\uu-\uu_{\text{ms},\ell})\\
&=a(\uu-\uu_{\text{ms},\ell},\uu)\\
&=\overline{a(\uu,\mathbf{e}_{\text{ms},\ell})}\\
&=\langle (\mathbf{e}_{\text{ms},\ell})_T,\boldsymbol{g}\rangle_{\partial D}\\
&=0.
\end{align*}
 This means $\nabla p=\zzz$ and thus $\mathbf{e}_{\text{ms},\ell}=\zzz$. Hence, this proof is done. For this reason, we assume for some $\beta\in [0,1)$, there holds
\begin{align*}
\|\nabla p\|_{L^2(D)}\leq \beta \|{\mathbf{e}_{\text{ms},\ell}}\|_{L^2(D)}.
\end{align*}
This implies
\begin{align}\label{eq:pressure}
\|\nabla p\|_{L^2(D)}\leq \frac{\beta}{\sqrt{1-\beta^2}} \|{\mathbf{e}^0_{\text{ms},\ell}}\|_{L^2(D)}.
\end{align}
To estimate it, we consider the adjoint problem. We introduce $\zz\in\mathbf{H}_{\op{imp}}(\mathrm{curl};D)$ such that
\begin{equation}\label{eq:adjoint-xxxx}
a(\ww,\zz)=(\ww,\mathbf{e}^0_{\text{ms},\ell})_{D} \quad \forall\ww\in\mathbf{H}_{\op{imp}}(\mathrm{curl};D).
\end{equation}
Note that $\nabla\cdot\mathbf{e}^0_{\text{ms},\ell}=0$ implies 
$\zz\in \mathbf{H}_{\op{imp}}(\mathrm{curl};D)\cap \mathbf{H}(\op{div }0;D)$ is divergence free. By taking $\ww\coloneqq \mathbf{e}_{\text{ms},\ell}$ in \eqref{eq:adjoint-xxxx}, we derive 
\begin{align*}
\|{\mathbf{e}^0_{\text{ms},\ell}}\|_{L^2(D)}^2&=(\mathbf{e}_{\text{ms},\ell},\mathbf{e}^0_{\text{ms},\ell})_{D}\\
&
=a(\mathbf{e}_{\text{ms},\ell},\zz).
\end{align*}
Due to the Galerkin orthogonality and the boundedness of the sesquilinear form \eqref{a property 1},  for any $\ww\in\mathbf{V}_{\text{ms},\ell}$, there holds
\begin{equation*}
\begin{aligned}
\|{\mathbf{e}^0_{\text{ms},\ell}}\|_{L^2(D)}^2&=|a(\mathbf{e}_{\text{ms},\ell},\zz-\ww)|\\
&\leq\Vert\mathbf{e}_{\text{ms},\ell}\Vert_{\mathbf{H}_{\op{imp}}(\mathrm{curl};D)}\cdot\Vert\zz-\ww\Vert_{\mathbf{H}_{\op{imp}}(\mathrm{curl};D)}.
\end{aligned}	
\end{equation*}
This implies, 
\begin{equation}\label{term 2}
\|{\mathbf{e}^0_{\text{ms},\ell}}\|_{L^2(D)}^2
\leq \Vert\mathbf{e}_{\text{ms},\ell}\Vert_{\mathbf{H}_{\op{imp}}(\mathrm{curl};D)}\cdot\inf_{\ww\in\mathbf{V}_{\text{ms},\ell}}\Vert\zz-\ww\Vert_{\mathbf{H}_{\op{imp}}(\mathrm{curl};D)}.
\end{equation}
Next, we derive the approximation properties of $\mathbf{V}_{\text{ms},\ell}$ to $\zz$ defined by \eqref{eq:adjoint-xxxx}. First, we introduce bubble functions for each subdomain $\omega_i$, which reads as seeking $\zz_b^i\in H_0(\op{curl};\omega_i)$, s.t., 
\begin{align*}
a(\ww,\zz_b^i)=(\ww,\mathbf{e}^0_{\text{ms},\ell})_{\omega_i}\qquad \forall\ww\in H_0(\op{curl};\omega_i),
\end{align*}
$\zz_r^i\in H_{\op{imp}}(\op{curl};\omega_i)$, satisfying
\begin{align*}
a(\ww,\zz_r^i)=-\langle(\ww)_T,\epsilon^{-1}\op{curl}\zz_b^i\times\nn\rangle_{\partial \omega_i}\qquad \forall\ww\in H_{\op{imp}}(\op{curl};\omega_i)
\end{align*}
and $\zz_{I}^i\in H_{\op{imp}}(\op{curl};\omega_i)$, defined by
\begin{align*}
a(\ww,\zz_I^i)=\langle(\ww)_T,\mathcal{B}\zz\rangle_{\gamma_i}\qquad\forall\ww\in H_{\op{imp}}(\op{curl};\omega_i).
\end{align*}
Then we define the global functions using the partition of unity functions by $\zz_b\coloneqq\sum_{i=1}^N\chi_i\zz_b^i$, $\zz_r\coloneqq\sum_{i=1}^N\chi_i\zz_r^i$ and $\zz_{I}\coloneqq \sum_{i=1}^N\chi_i\zz_{I}^i$, which all belong to $H_{\op{imp}}(\op{curl};D)$. Moreover, they form a splitting of $\zz$, i.e., $\zz= \zz_b+\zz_r+\zz_I$.


 We first estimate $\zz_b$.  
 The proof to Lemma \ref{thm:pw-Regularity} implies local estimates for local bubble parts, 
\begin{equation}\label{eq:local-bubble-xxxxx}
	\begin{aligned}
\|\zz^i_b\|_{L^2(\omega_i)}&\leq \Ce\Cpoin{\omega_i}\bar{\epsilon}_iH^2\|\mathbf{e}^0_{\text{ms},\ell}\|_{L^2(\omega_i)}\\
\|\op{curl}\zz^i_b\|_{L^2(\omega_i)}&\leq \Ce\Cpoin{\omega_i}^{1/2}\bar{\epsilon}_iH\|\mathbf{e}^0_{\text{ms},\ell}\|_{L^2(\omega_i)}.
	\end{aligned}
\end{equation}
 Collecting these two estimates and using Assumption \ref{ass:resolution}, the finite overlapping condition \eqref{eq:overlap} and the properties of the partition of unity functions \eqref{eq:pou}, we derive 
\begin{align}\label{eq:finalAlpha}
\Vert\zz_b\Vert_{\mathbf{H}_{\op{imp}}(\mathrm{curl};D)}
&\leq\Const{b} \Big(1+\max_{i=1,\cdots,N}\{\Lambda_i^{1/2}\bar{\epsilon}_i^{1/2}\}\Big)H\|\mathbf{e}^0_{\text{ms},\ell}\|_{L^2(D)}\nonumber\\
&\leq\Const{b} \overline{\eta}H\|\mathbf{e}^0_{\text{ms},\ell}\|_{L^2(D)}.
\end{align}
Here, $\Const{b}$ denotes a positive constant independent of $H$ and $k$ that changes value from context to context.

Next, we estimate the approximation property of $\mathbf{V}_{\text{ms},\ell}$ to $\zz_r$. By \eqref{eq:finalyyyy}, the trace inequality and the first condition in \eqref{mathbf c}, we derive
\begin{align}
\|\mathcal{B}\zz^i_{r}\|_{H^{1/2}(\partial \omega_i)}&=\|\epsilon^{-1}\op{curl}\zz^i_b\times\nn\|_{H^{1/2}(\partial \omega_i)}\nonumber\\
&\leq 
\Const{b}(\Lambda_iH+1)\Vert\mathbf{e}^0_{\text{ms},\ell}\Vert_{L^2(\omega_i)}\nonumber\\
&\leq 
\Const{b}\Vert\mathbf{e}^0_{\text{ms},\ell}\Vert_{L^2(\omega_i)}.\label{eq:finalzzzz}
\end{align}
Then we approximate $\zz^i_{r}$ by $\mathcal{L}^{-1}_i(\mathbf{V}_{i,\ell})$.  A combination of
Theorem \ref{lem:very-weak}, Corollary \ref{corollary:very-weak}, \eqref{prop:approx-wavelets} and \eqref{eq:finalzzzz}, indicates 
\begin{align*}
&\quad\|\epsilon^{-1/2}\op{curl}((\mathcal{P}_{i,\ell}-I)\zz^i_{r})\|_{L^2(\omega_i)}+k\|(\mathcal{P}_{i,\ell}-I)\zz^i_{r}\|_{L^2(\omega_i)}\\
&\qquad\;\,\qquad\qquad\qquad\qquad\qquad\qquad+k^{1/2}\|((\mathcal{P}_{i,\ell}-I)\zz^i_{r})_T\|_{L^2(\partial\omega_i)}\\
&\lesssim k^{-1/2}\|\mathcal{B}(\mathcal{P}_{i,\ell}\zz_r^i-\zz_r^i)\|_{L^2(\partial\omega_i)}\\
	&\lesssim k^{-1/2}2^{-\ell/2}H^{1/2}\|\mathcal{B}\zz_r^i\|_{H^{1/2}(\partial\omega_i)}\\
	&\leq\Const{b}k^{-1/2}2^{-\ell/2}H^{1/2}\Vert\mathbf{e}^0_{\text{ms},\ell}\Vert_{L^2(\omega_i)}.
	\end{align*}
Together with \eqref{ass:dual}, the finite overlapping condition \eqref{eq:overlap} and the properties of the partition of unity functions \eqref{eq:pou}, we derive
\begin{align*}
\|\op{curl}(\mathcal{P}_{\ell}-I)\zz_{r}\|_{L^2(D;\epsilon^{-1/2})}&\leq\Const{b} k^{-1/2}2^{-\ell/2}H^{1/2}\|\mathbf{e}^0_{\text{ms},\ell}\|_{L^2(D)}\\
\|(\mathcal{P}_{\ell}-I)\zz_{I}\|_{L^2(D)}
&\leq\Const{b} k^{-3/2}2^{-\ell/2}H^{1/2}\|\mathbf{e}^0_{\text{ms},\ell}\|_{L^2(D)}.
\end{align*}
Collecting these two estimates, we derive 
\begin{align}\label{eq:xxx1234}
\Vert(\mathcal{P}_{\ell}-I)\zz_{r}\Vert_{\mathbf{H}_{\op{imp}}(\mathrm{curl};D)}\leq\Const{b} k^{-1/2}2^{-\ell/2}H^{1/2}\|\mathbf{e}^0_{\text{ms},\ell}\|_{L^2(D)}.
\end{align}
We then estimate the approximation property of $\mathbf{V}_{\text{ms},\ell}$ to $\zz_I$. 
Theorem \ref{lem:very-weak}, Corollary \ref{corollary:very-weak}, \eqref{prop:approx-wavelets} and the trace inequality, indicate, 
\begin{align*}
&\quad\|\epsilon^{-1/2}\op{curl}(\mathcal{P}_{i,\ell}-I)\zz^i_{I}\|_{L^2(\omega_i)}+k\|(\mathcal{P}_{i,\ell}-I)\zz^i_{I}\|_{L^2(\omega_i)}\\
	&\qquad\qquad\qquad\qquad\qquad\qquad\qquad+k^{1/2}\|((\mathcal{P}_{i,\ell}-I)\zz^i_{I})_T\|_{L^2(\partial\omega_i)}\\
	&\lesssim k^{-1/2}\|\mathcal{B}(\mathcal{P}_{i,\ell}\zz-\zz)\|_{L^2(\partial\omega_i)}\\
	&\lesssim k^{-1/2}2^{-(s-1/2)\ell}H^{s-1/2}\|\mathcal{B}\zz\|_{H^{s-1/2}(\partial\omega_i)}
	\\
	&\lesssim k^{-1/2}2^{-(s-1/2)\ell}H^{s-1/2}\|\zz\|_{H^{s}_{\op{curl}}(\omega_i)}.
	\end{align*}
Together with \eqref{ass:dual}, the finite overlapping condition \eqref{eq:overlap} and the properties of the partition of unity functions \eqref{eq:pou}, we derive
\begin{align*}
\|\op{curl}(\mathcal{P}_{\ell}-I)\zz_{I}\|_{L^2(D;\epsilon^{-1/2})}&\leq\Const{b}k^{\theta+1/2}2^{-(s-1/2)\ell}H^{s-1/2}\|\mathbf{e}^0_{\text{ms},\ell}\|_{L^2(D)}\\
\|(\mathcal{P}_{\ell}-I)\zz_{I}\|_{L^2(D)}
&\leq\Const{b} k^{\theta-1/2}2^{-(s-1/2)\ell}H^{s-1/2}\|\mathbf{e}^0_{\text{ms},\ell}\|_{L^2(D)}.
\end{align*}
Collecting these two estimates, we derive 
\begin{align*}
\Vert(\mathcal{P}_{\ell}-I)\zz_{I}\Vert_{\mathbf{H}_{\op{imp}}(\mathrm{curl};D)}\leq\Const{b}k^{\theta+1/2}2^{-(s-1/2)\ell}H^{s-1/2}\|\mathbf{e}^0_{\text{ms},\ell}\|_{L^2(D)}.
\end{align*}
Together with the decomposition $\zz\coloneqq \zz_b+\zz_r+\zz_I$, \eqref{term 2}, \eqref{eq:finalAlpha}, \eqref{eq:xxx1234} and the linearity of $\mathcal{P}_{\ell}$, we derive
\begin{align}
\|{\mathbf{e}^0_{\text{ms},\ell}}\|_{L^2(D)}^2
 &\leq \Const{b}\overline{\eta} H^{1/2}k^{-1/2}\|\mathbf{e}_{\text{ms},\ell}\|_{\mathbf{H}_{\op{imp}}(\mathrm{curl};D)}
 \|\mathbf{e}^0_{\text{ms},\ell}\|_{L^2(D)}\nonumber\\
 &+\Vert\mathbf{e}_{\text{ms},\ell}\Vert_{\mathbf{H}_{\op{imp}}(\mathrm{curl};D)}\Vert(\mathcal{P}_{\ell}-I)\zz_{I}\Vert_{\mathbf{H}_{\op{imp}}(\mathrm{curl};D)}\nonumber\\
 &\lesssim \Big(\overline{\eta}k^{-1-\alpha/2}+k^{\theta+1/2}2^{-(s-1/2)\ell}H^{s-1/2}\Big)\nonumber\\
 &\times\|\mathbf{e}_{\text{ms},\ell}\|_{\mathbf{H}_{\op{imp}}(\mathrm{curl};D)}\|\mathbf{e}^0_{\text{ms},\ell}\|_{L^2(D)}. \nonumber
\end{align}
Hence, we derive under condition \eqref{mathbf c},
\begin{align*}
\|{\mathbf{e}^0_{\text{ms},\ell}}\|_{L^2(D)}\leq \frac{\sqrt{1-\beta^2}}{2k}\|\mathbf{e}_{\text{ms},\ell}\|_{\mathbf{H}_{\op{imp}}(\mathrm{curl};D)}.
\end{align*}
Finally, the G$\mathring{a}$rding's inequality \eqref{a property 2} combining with \eqref{eq:pressure} and \eqref{eq:div-grad-est} implies
\begin{align*}
&\Vert\mathbf{e}_{\text{ms},\ell}\Vert_{\mathbf{H}_{\op{imp}}(\op{curl};D)}^2\\
&\leq |a(\mathbf{e}_{\text{ms},\ell},\mathbf{e}_{\text{ms},\ell})|+2k^2\|{\mathbf{e}_{\text{ms},\ell}}\|_{L^2(D)}^2\\
&\leq\Vert\mathbf{e}_{\text{ms},\ell}\Vert_{\mathbf{H}_{\op{imp}}(\mathrm{curl};D)}\cdot\inf_{\ww\in V_{\op{ms},\ell}}\Vert\uu_I-\ww\Vert_{\mathbf{H}_{\op{imp}}(\mathrm{curl};D)}+\frac{2k^2}{1-\beta^2}\|{\mathbf{e}^0_{\text{ms},\ell}}\|_{L^2(D)}^2\\
&\leq\Vert\mathbf{e}_{\text{ms},\ell}\Vert_{\mathbf{H}_{\op{imp}}(\mathrm{curl};D)}\cdot\inf_{\ww\in V_{\op{ms},\ell}}\Vert\uu_I-\ww\Vert_{\mathbf{H}_{\op{imp}}(\mathrm{curl};D)}+1/2\|{\mathbf{e}_{\text{ms},\ell}}\|_{\mathbf{H}_{\op{imp}}(\mathrm{curl};D)}^2.
\end{align*}
We derive
\begin{align*}
\Vert\mathbf{e}_{\text{ms},\ell}\Vert_{\mathbf{H}_{\op{imp}}(\op{curl};D)}\leq 2
\inf_{\ww\in V_{\op{ms},\ell}}\Vert\uu_I-\ww\Vert_{\mathbf{H}_{\op{imp}}(\mathrm{curl};D)}.
\end{align*}
This proved the desired assertion.
\end{proof}
\section{Numerical tests}\label{sec:num}

In this section, we present a set of three-dimensional experiments to assess the performance of Algorithm~\ref{algorithm:wavelet} and to illustrate the practical implications of the a priori estimate in Theorem~\ref{wellpose}. The computational domain is the unit cube $D:=[0,1]^3$.

For the homogeneous medium $\varepsilon(\xx)\equiv 1$, we use the manufactured field
\[
\uu(\xx)=\bigl(0,e^{ikx_1},0\bigr)^\top,
\qquad
\xx=(x_1,x_2,x_3)^\top\in D.
\]
Throughout all tests, the impedance boundary data is $\boldsymbol{g}(\xx;k)\coloneqq\op{curl}\uu\times \nn-ik\uu_T$ on $\partial D$. 

\paragraph{Discretization, reference solver, and error metrics.}
We use the pair of mesh sizes
$H=\frac1{16},\; h=\frac1{64}$, 
so that the effect of the medium and the wavelet level can be examined independently of mesh refinement.
This choice is consistent with the resolution requirements in the analysis.
More precisely, condition \eqref{mathbf c} requires a coarse scale of order $H\leq k^{-1-\alpha}$ for some $\alpha>0$, 
whereas the fine-grid reference discretization must be chosen sufficiently small to suppress the pre-asymptotic pollution of the lowest-order edge-element approximation. In particular, for the range $k\in\{5,10,15\}$ considered here, the fine mesh $h=1/64$ is sufficiently small for all tested wavenumbers, while the fixed coarse scale $H=1/16$ probes both a comfortably resolved regime ($k=5,10$) and a near-threshold regime ($k=15$). 

The reference solution is computed on $\mathcal T_h$ by the standard lowest-order N\'ed\'elec edge element method on hexahedra presented in Section \ref{subsec:fem}. In the present implementation, the size of the resulting sparse linear system is $811200\times 811200$. In particular, the exact manufactured solution is available for the homogeneous medium with $\varepsilon\equiv 1$ and it is used directly as the reference solution. 

For the multiscale approximation $\uu_{{\rm ms},\ell}$ produced by Algorithm~\ref{algorithm:wavelet} at wavelet level $\ell$, we report the relative errors
\begin{align*}
	e_{\mathbf H_{\op{imp}}(\op{curl};D)}
	\coloneqq \frac{\|\uu_{{\rm ms},\ell}-\uu_{\rm ref}\|_{\mathbf H_{\op{imp}}(\op{curl};D)}}
	{\|\uu_{\rm ref}\|_{\mathbf H_{\op{imp}}(\op{curl};D)}},\;
	e_{L^2(D)}
	\coloneqq\frac{\|\uu_{{\rm ms},\ell}-\uu_{\rm ref}\|_{L^2(D)}}
	{\|\uu_{\rm ref}\|_{L^2(D)}}.
\end{align*}
Here, $\uu_{{\rm ms},\ell}$ is computed on the coarse mesh with local multiscale bases indexed by $\ell$, whereas $\uu_{\rm ref}$ resolves the fine scale on $\mathcal T_h$.

The level-$0$ multiscale space consists of the scaling part of the Haar trace basis on coarse faces and yields $52020$ global multiscale basis functions. Hence the size of the corresponding coarse problem is $52020\times 52020$, which is approximately $15.6$ times smaller in dimension than the fine-grid reference system. The level-$1$ enrichment adds the first Haar-detail traces and generates an additional $156060$ basis functions. Thus the enriched space has a total of $208080$ basis functions, corresponding to a sparse system of size $208080\times 208080$, which is still about $3.9$ times smaller in dimension than the fine-grid reference problem. These dimension counts quantify the algebraic reduction achieved by the multiscale construction before any additional iterative acceleration is used.

\paragraph{Test 1: high-contrast permittivity.}
We first consider three media:
the homogeneous case $\varepsilon(\mathbf{x})\equiv 1$,
and two two-phase inclusion cases in which
\[
\varepsilon(\mathbf{x})=1 \quad \text{in } D\setminus\Yueqi{[13/32,19/32]}^3,
\qquad
\varepsilon(\mathbf{x})=\varepsilon_{\rm in}\quad \text{in } \Yueqi{[13/32,19/32]}^3,
\]
with $\varepsilon_{\rm in}\in\{5,10\}$.
Since the fine mesh resolves the interface, the reported errors reflect only the approximation quality of the multiscale space, rather than a geometric under-resolution effect.

The results are summarized in Table~\ref{ta:test1_new}. 
First, the multiscale method is already accurate at level $\ell=0$, and a single enrichment step to $\ell=1$ reduces both norms to the sub-$2\%$ regime for all three wavenumbers in the homogeneous case. This confirms that, when the medium itself is simple, the dominant task of the coarse space is to capture the oscillatory wave content, and the first Haar enrichment is already sufficient to recover the missing trace information with high fidelity. Second, in the moderate-contrast case $\varepsilon_{\rm in}=5$, the level-$0$ errors increase compared with the homogeneous case, but the enrichment remains uniformly effective. The error reduction from $\ell=0$ to $\ell=1$ is systematic for both norms and for all tested $k$. This behavior is fully consistent with the local-global splitting underlying the method: once the coefficient exhibits a jump across an internal interface, the harmonic extension part becomes harder to represent from coarse traces alone, and the additional wavelet detail at level $\ell=1$ becomes necessary to transmit the interfacial fine-scale information across neighboring coarse cells.
Third, for the stronger contrast $\varepsilon_{\rm in}=10$, the gap between $\ell=0$ and $\ell=1$ becomes even more pronounced. The level-$0$ approximation deteriorates substantially as $k$ increases, while the enriched approximation remains comparatively stable. In particular, at $k=15$, the impedance error drops from $18.471\%$ to $6.940\%$, and the $L^2$-error drops from $16.794\%$ to $6.643\%$. This is precisely the regime where the enrichment is expected to matter most: the coarse space without wavelet detail is not sufficiently expressive to encode both the oscillatory phase and the coefficient-induced local distortion, whereas the enriched traces substantially improve the approximation of the harmonic extension component.


\begin{table}[t]
	\centering
\caption{Simulation results for Test~1.
}\label{ta:test1_new}
		\begin{tabular}{ccc|cc|cc}
			\toprule
			\multicolumn{7}{c}{$\varepsilon \equiv 1$}\\
			\midrule
			\multirow{2}{*}{$k$} & \multirow{2}{*}{$H$} & \multirow{2}{*}{$h$}
			& \multicolumn{2}{c|}{$\ell=0$} & \multicolumn{2}{c}{$\ell=1$}\\
			\cmidrule(lr){4-5}\cmidrule(lr){6-7}
			&&& $e_{\mathbf{H}_{\mathrm{imp}}(\mathrm{curl};D)}$ & $e_{L^2(D)}$
			& $e_{\mathbf{H}_{\mathrm{imp}}(\mathrm{curl};D)}$ & $e_{L^2(D)}$\\
			\midrule
			5  & 1/16 & 1/64 & 5.780\%  & 2.855\% & 0.455\% & 0.404\%\\
			10 & 1/16 & 1/64 & 8.769\%  & 5.295\% & 0.957\% & 0.906\%\\
			15 & 1/16 & 1/64 & 11.079\% & 7.567\% & 1.705\% & 1.619\%\\
			\midrule
			\multicolumn{7}{c}{$\varepsilon=1$ in $D\setminus{[13/32,19/32]}^3$ and $\varepsilon=5$ in ${[13/32,19/32]}^3$}\\
			\midrule
			\multirow{2}{*}{$k$} & \multirow{2}{*}{$H$} & \multirow{2}{*}{$h$}
			& \multicolumn{2}{c|}{$\ell=0$} & \multicolumn{2}{c}{$\ell=1$}\\
			\cmidrule(lr){4-5}\cmidrule(lr){6-7}
			&&& $e_{\mathbf{H}_{\mathrm{imp}}(\mathrm{curl};D)}$ & $e_{L^2(D)}$
			& $e_{\mathbf{H}_{\mathrm{imp}}(\mathrm{curl};D)}$ & $e_{L^2(D)}$\\
			\midrule
			5  & 1/16 & 1/64 & 7.081\%  & 5.070\% & 3.209\% & 2.958\%\\
			10 & 1/16 & 1/64 & 9.512\%  & 7.009\% & 4.413\% & 3.946\%\\
			15 & 1/16 & 1/64 & 12.232\% & 9.440\% & 7.197\% & 5.852\%\\
			\midrule
			\multicolumn{7}{c}{$\varepsilon=1$ in $D\setminus{[13/32,19/32]}^3$ and $\varepsilon=10$ in ${[13/32,19/32]}^3$}\\
			\midrule
			\multirow{2}{*}{$k$} & \multirow{2}{*}{$H$} & \multirow{2}{*}{$h$}
			& \multicolumn{2}{c|}{$\ell=0$} & \multicolumn{2}{c}{$\ell=1$}\\
			\cmidrule(lr){4-5}\cmidrule(lr){6-7}
			&&& $e_{\mathbf{H}_{\mathrm{imp}}(\mathrm{curl};D)}$ & $e_{L^2(D)}$
			& $e_{\mathbf{H}_{\mathrm{imp}}(\mathrm{curl};D)}$ & $e_{L^2(D)}$\\
			\midrule
			5  & 1/16 & 1/64 & 8.876\%  & 5.964\%  & 4.656\% & 2.832\%\\
			10 & 1/16 & 1/64 & 11.501\% & 7.542\%  & 6.917\% & 3.852\%\\
			15 & 1/16 & 1/64 & 18.471\% & 16.794\% & 6.940\% & 6.643\%\\
			\bottomrule
		\end{tabular}
\end{table}

\paragraph{Test 2: highly oscillatory permittivity.}
We next consider
\[
\varepsilon_\delta(\mathbf{x})=\varepsilon(\mathbf{x}/\delta),\qquad
\varepsilon(\mathbf{x})
=\frac18\bigl(3+\sin(2\pi x_2)\bigr)\bigl(3+\sin(2\pi x_3)\bigr)\bigl(3+\sin(2\pi x_1)\bigr),
\]
with $\delta\in\{1,1/5,1/10\}$.
As $\delta$ decreases, the coefficient oscillates on finer and finer spatial scales. This is exactly the regime in which an explicit fine-scale resolution on the coarse mesh is infeasible, and hence it provides a stringent test for the proposed trace-based enrichment mechanism.

The results are reported in Table~\ref{ta:test2_new}. The most robust observation is that enrichment from $\ell=0$ to $\ell=1$ improves the approximation in every tested case. Across all three values of $\delta$ and all three values of $k$, the reduction is substantial in both norms, with the $\ell=1$ errors typically lying in the range of roughly $4\%$--$6\%$. What is more interesting is that, once the first wavelet enrichment is included, the resulting errors remain in a comparatively narrow band across $\delta=1,1/5,1/10$. This indicates that the first Haar-detail correction already captures the dominant oscillatory trace information induced by the coefficient. In other words, although the coefficient oscillates on progressively finer scales as $\delta$ decreases, the multiscale ansatz space at level $\ell=1$ is already rich enough to encode the leading effect of those oscillations on the coarse skeleton. 



\begin{table}[t]
	\centering
\caption{
Simulation results for Test~2.
}
	\label{ta:test2_new}
	\begin{adjustbox}{max width=\textwidth}
		\begin{tabular}{ccc|cc|cc}
			\toprule
			\multicolumn{7}{c}{$\delta=1$}\\
			\midrule
			\multirow{2}{*}{$k$} & \multirow{2}{*}{$H$} & \multirow{2}{*}{$h$}
			& \multicolumn{2}{c|}{$\ell=0$} & \multicolumn{2}{c}{$\ell=1$}\\
			\cmidrule(lr){4-5}\cmidrule(lr){6-7}
			&&& $e_{\mathbf{H}_{\mathrm{imp}}(\mathrm{curl};D)}$ & $e_{L^2(D)}$
			& $e_{\mathbf{H}_{\mathrm{imp}}(\mathrm{curl};D)}$ & $e_{L^2(D)}$\\
			\midrule
			5  & 1/16 & 1/64 & 15.530\% & 14.008\% & 3.845\% & 3.784\%\\
			10 & 1/16 & 1/64 & 20.527\% & 19.597\% & 5.619\% & 5.581\%\\
			15 & 1/16 & 1/64 & 27.418\% & 25.617\% & 6.067\% & 6.032\%\\
			\midrule
			\multicolumn{7}{c}{$\delta=1/5$}\\
			\midrule
			\multirow{2}{*}{$k$} & \multirow{2}{*}{$H$} & \multirow{2}{*}{$h$}
			& \multicolumn{2}{c|}{$\ell=0$} & \multicolumn{2}{c}{$\ell=1$}\\
			\cmidrule(lr){4-5}\cmidrule(lr){6-7}
			&&& $e_{\mathbf{H}_{\mathrm{imp}}(\mathrm{curl};D)}$ & $e_{L^2(D)}$
			& $e_{\mathbf{H}_{\mathrm{imp}}(\mathrm{curl};D)}$ & $e_{L^2(D)}$\\
			\midrule
			5  & 1/16 & 1/64 & 18.755\% & 17.659\% & 4.622\% & 4.035\%\\
			10 & 1/16 & 1/64 & 24.578\% & 22.849\% & 6.193\% & 5.406\%\\
			15 & 1/16 & 1/64 & 25.878\% & 24.755\% & 6.341\% & 6.229\%\\
			\midrule
			\multicolumn{7}{c}{$\delta=1/10$}\\
			\midrule
			\multirow{2}{*}{$k$} & \multirow{2}{*}{$H$} & \multirow{2}{*}{$h$}
			& \multicolumn{2}{c|}{$\ell=0$} & \multicolumn{2}{c}{$\ell=1$}\\
			\cmidrule(lr){4-5}\cmidrule(lr){6-7}
			&&& $e_{\mathbf{H}_{\mathrm{imp}}(\mathrm{curl};D)}$ & $e_{L^2(D)}$
			& $e_{\mathbf{H}_{\mathrm{imp}}(\mathrm{curl};D)}$ & $e_{L^2(D)}$\\
			\midrule
			5  & 1/16 & 1/64 & 19.012\% & 17.655\% & 5.392\% & 5.226\%\\
			10 & 1/16 & 1/64 & 20.317\% & 19.672\% & 6.031\% & 5.706\%\\
			15 & 1/16 & 1/64 & 19.879\% & 18.387\% & 5.714\% & 5.678\%\\
			\bottomrule
		\end{tabular}
	\end{adjustbox}
\end{table}

\section{Conclusion}\label{sec:conclusion}
We presented in this paper a novel numerical homogenization method for time-harmonic Maxwell equations in heterogeneous media with large wavenumber. The convergence of this method with respect to the level parameter $\ell$ is established by means of proposing a novel nonstandard variational formulation. Numerical experiments verify its performance for wavenumbers up to 15. Several avenues for further research remain. One key challenge is to obtain accurate solutions for much larger wavenumbers. The primary difficulty stems from the linear system size growing at least cubically with respect to the wavenumber, posing significant challenges in storage and computational solution. A potential approach is to develop a fast solver based on a two-level domain decomposition method, utilizing the designed multiscale ansatz space as the coarse space. Another important question concerns preserving the divergence-free property within the multiscale ansatz space. Currently, due to the multiplication of partition of unity functions in our approach, this property is lost, which may be critical in practical applications. Therefore, we plan to explore the development of a multiscale ansatz space that maintains the divergence-free property without relying on partition of unity functions.
\section*{Acknowledgements}
Part of this research was performed while the authors were visiting the Institute for Mathematical and Statistical Innovation (IMSI), which is supported by the National Science Foundation (Grant No. DMS-1929348).
\bibliographystyle{siamplain}
\bibliography{reference}
\end{document}